\newcommand\R{{\mathbb{R}}}
\newcommand\Z{{\mathbb{Z}}}
\newcommand\N{{\mathbb{N}}}
\newcommand\bi{{\textbf{i}}}
\newcommand\bj{{\textbf{j}}}
\newcommand\supp{\operatorname{supp}}
\newcommand\dist{\operatorname{dist}}
\newcommand\CA{{\mathcal A}}
\newcommand\CN{{\mathcal N}}
\newcommand\CS{{\mathcal S}}
\theoremstyle{plain}
  \newtheorem{theorem}{Theorem}
  \newtheorem{proposition}[theorem]{Proposition}
  \newtheorem{lemma}[theorem]{Lemma}
  \newtheorem{corollary}[theorem]{Corollary}
\theoremstyle{definition}
\date{\today}
\title[Control of pseudodifferential operators by maximal functions]{Control of pseudodifferential operators by maximal functions via weighted inequalities}
\author{David Beltran}
\address{School of Mathematics, University of Birmingham, Edgbaston, Birmingham, B15 2TT, UK}
\email{dbeltran89@gmail.com}
\thanks{This work was supported by the European Research Council [grant
number 307617]}
\subjclass[2010]{35S05;42B25}
\keywords{Symbol classes; Pseudodifferential operators; Weighted inequalities; Maximal operators}
\begin{document}

\begin{abstract}
We establish general weighted $L^2$ inequalities for pseudodifferential operators associated to the Hörmander symbol classes $S^m_{\rho,\delta}$. Such inequalities allow to control these operators by fractional ``non-tangential" maximal functions, and subsume the optimal range of Lebesgue space bounds for pseudodifferential operators. As a corollary, several known Muckenhoupt type bounds are recovered, and new bounds for weights lying in the intersection of the Muckenhoupt and reverse Hölder classes are obtained. The proof relies on a subdyadic decomposition of the frequency space, together with applications of the Cotlar--Stein almost orthogonality principle and a quantitative version of the symbolic calculus.
\end{abstract}
\maketitle

\section{Introduction}\label{sec:Intro}

Given a smooth function $a \in C^\infty(\R^d \times \R^d)$, we define the associated pseudodifferential operator $T_a$ by
$$
T_af(x)=\int_{\mathbb{R}^d}e^{ix\cdot\xi}a(x,\xi)\widehat{f}(\xi)d\xi,
$$
where $f \in \mathcal{S}$ and $\widehat{f}$ denotes the Fourier transform of $f$. The smooth function $a$ is typically referred to as the \textit{symbol}, and throughout the rest of the paper, we will assume that $a$ belongs to the symbol classes $S^m_{\rho,\delta}$ introduced by Hörmander in \cite{Hormander}; recall that $S^m_{\rho,\delta}$ consists of all $a \in C^\infty(\R^d \times \R^d)$ satisfying the differential inequalities
\begin{equation}\label{symbol}
|\partial_x^\nu \partial_\xi^\sigma a(x,\xi)| \lesssim (1+|\xi|)^{m-\rho|\sigma|+\delta|\nu|}
\end{equation}
for all multi-indices $\nu,\sigma \in \N^d$, where $m \in \R$ and $0\leq \delta, \rho \leq 1$.\footnote{We use the notation $A \lesssim B$ to denote that there is a constant $C$ such that $A \leq CB$. The implicit constant will always be independent of the weight $w$. We omit the constant factors of $\pi$ coming from our normalisation of the Fourier transform.}

The study of pseudodifferential operators was initiated by Kohn and Nirenberg \cite{KN} and Hörmander \cite{Hormander}, and it has played a central role in the theory of partial differential equations. The $L^p$-boundedness of these operators has been extensively studied, see for instance the work of Calderón and Vaillancourt \cite{CV1972} for the $L^2$-boundedness of the classes $S^0_{\rho,\rho}$, with $0 \leq \rho<1$, or Hörmander \cite{Hormander}, Fefferman \cite{Fe73} or Stein \cite{bigStein} for $L^p$ bounds for the symbol classes $S_{\rho,\delta}^m$. Weighted $L^p$-boundedness in the context of the Muckenhoupt $A_p$ classes has also been studied, see for example the work of Miller \cite{Miller}, Chanillo and Torchinsky \cite{CT}, or the most recent work of Michalowski, Rule and Staubach \cite{MRS10, MRScan}. 

In this paper we are interested in establishing weighted inequalities for $T_a$ valid for any weight $w$, and that fall well beyond the classical $A_p$ theory. Our main result is the following.

\begin{theorem}\label{MainThm}
Let $a \in S^{m}_{\rho,\delta}$, where $m \in \R$, $0\leq \delta \leq \rho \leq 1$, $\delta <1$. Then
\begin{equation}\label{WeightedIneqSymbols}
\int_{\R^d} |T_a f|^2 w \lesssim \int_{\R^d} |f|^2 M^2\mathscr{M}_{\rho,m}M^5w
\end{equation}
holds for any non-negative $w \in L^1_{\text{loc}}(\R^d)$, where
$$
\mathscr{M}_{\rho,m} w(x):= \sup_{(y,r) \in \Lambda_\rho(x)} \frac{1}{|B(y,r)|^{1+2m/d}} \int_{B(y,r)} w
$$
and
$$
\Lambda_\rho(x):=\{(y,r)\in \R^{d} \times (0,1) : |y-x| \leq r^{\rho}\}.
$$
\end{theorem}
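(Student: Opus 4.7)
The plan is to follow the three-ingredient strategy advertised in the abstract: a subdyadic decomposition of the frequency space adapted to $S^m_{\rho,\delta}$, a single-piece weighted kernel estimate that produces $\mathscr{M}_{\rho,m}$, and a reassembly via the Cotlar--Stein almost orthogonality lemma powered by a quantitative symbolic calculus. Concretely, I would fix a Littlewood--Paley partition $1=\sum_{j\ge 0}\psi_j$ with $\psi_j$ supported in $|\xi|\sim 2^j$ and write $a=\sum_j a_j$. Inside each shell I would subdivide further into cubes of side $2^{j\rho}$, giving $a_j=\sum_k a_{j,k}$ and $\sim 2^{jd(1-\rho)}$ pieces per shell. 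This is the natural subdyadic scale for $S^m_{\rho,\delta}$, since on a frequency ball of radius $2^{j\rho}$ the symbol $a_{j,k}$ is essentially frozen in $\xi$ modulo the central oscillation $e^{ix\cdot\xi_{j,k}}$. Repeated integration by parts in $\xi$ yields the pointwise bound
\[
|K_{j,k}(x,y)|\lesssim \frac{2^{jm+jd\rho}}{(1+2^{j\rho}|x-y|)^N}\qquad\forall\,N,
\]
so $K_{j,k}$ is essentially supported in a ball of radius $2^{-j\rho}$ with peak size $2^{jm+jd\rho}$.

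Applying a Cauchy--Schwarz/Schur argument against this kernel gives a single-piece estimate in which $w$ appears as an average over $B(y,2^{-j\rho})$ scaled by a $j$-dependent power of $2$; rewriting that prefactor as $|B(y,2^{-j\rho})|^{-(1+2m/d)}$ matches precisely the definition of $\mathscr{M}_{\rho,m}$ at the scale $r=2^{-j\rho}$, with the non-tangential condition $|z-y|\le r^\rho$ automatically satisfied on the support of $K_{j,k}(\,\cdot\,,y)$. One copy of $M$ absorbs the passage from $y$ to the evaluation point, yielding $\int|T_{a_{j,k}}f|^2w\lesssim \int|f|^2\,M\mathscr{M}_{\rho,m}w$ for each single piece.

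Summation is the heart of the proof. A naive triangle inequality over $k$ loses a factor $2^{jd(1-\rho)}$ per shell, which only recovers the endpoint $m=-d(1-\rho)/2$ bound. To reach the full range one invokes Cotlar--Stein: the compositions $T_{a_{j,k}}^*T_{a_{j',k'}}$ and $T_{a_{j,k}}T_{a_{j',k'}}^*$ are again pseudodifferential, with symbols essentially supported where both factor symbols are, hence nearly vanishing for $(j,k)\ne(j',k')$; the quantitative symbolic calculus supplies the off-diagonal decay the lemma requires. The additional $M^2$ and $M^5$ flanking $\mathscr{M}_{\rho,m}$ in \eqref{WeightedIneqSymbols} arise from transferring the weight $w$ across these compositions, each transfer producing one pointwise maximal domination.

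The principal obstacle I expect is implementing Cotlar--Stein in the weighted $L^2(w)$ setting for an arbitrary $w$ with no $A_p$ or reverse H\"older hypothesis: the classical statement is purely $L^2$, and its weighted counterpart requires pointwise kernel control on the composed operators rather than mere operator norms, together with a careful tracking of the symbolic-calculus remainders. A secondary difficulty arises near the boundary $\delta=\rho$ and as $\delta\to 1^-$, where the asymptotic expansion converges only marginally and the integration-by-parts count must be chosen quantitatively. The precise exponents $M^2$ and $M^5$ are almost certainly artefacts of this bookkeeping rather than sharp.
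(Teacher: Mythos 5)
Your high-level ingredient list---subdyadic frequency decomposition, single-piece kernel estimates producing $\mathscr{M}_{\rho,m}$, Cotlar--Stein, quantitative symbolic calculus---is the right one, and the kernel bound for a single piece $a_{j,k}$ supported on a $\xi$-cube of side $2^{j\rho}$ at distance $2^j$ from the origin is correct and does generate the right averaging scale and normalisation. However, there is a genuine gap at precisely the point you flag as ``the principal obstacle'': you give no mechanism for running Cotlar--Stein against an arbitrary weight $w$. The paper's central device, which you do not identify, is to first smooth $w$ to a fixed band-limited majorant (the operator $\mathcal{A}$, built from $\Psi^{(N_0)}\ast\widetilde{w}$ where $\widetilde{w}$ is a sup over unit balls) and then \emph{conjugate}: one proves the unweighted $L^2$ bound for $S=(\mathcal{A}w)^{1/2}\,T_a\,(\mathcal{A}w)^{-1/2}$, with constant uniform in $w$, so that the classical Cotlar--Stein lemma applies directly to the family $\{S_{\mathbf{i}}\}$. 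The polynomial growth and Harnack-type comparability of $\mathcal{A}w$ (Lemma \ref{PolyGrowth} and Lemma \ref{HarnackGoodDecay}) are what make the conjugated Schur-test estimates close, and this is the crux that your sketch leaves unresolved.

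A second structural issue is that you localise only in $\xi$ (cubes of side $2^{j\rho}$ within each dyadic shell). After the rescaling $\widetilde{a}(x,\xi)=a(R^{-\rho}x,R^\rho\xi)$ that reduces matters to $S^0_{0,0}$, the symbol has bounded but non-decaying $x$-derivatives, so the Cotlar--Stein off-diagonal estimates for $T_{a_{j,k}}^*T_{a_{j',k'}}$ and $T_{a_{j,k}}T_{a_{j',k'}}^*$ do not close without a simultaneous unit-scale decomposition in $x$: the paper takes $a_{\mathbf{i}}(x,\xi)=a(x,\xi)\psi(x-i)\psi(\xi-i')$ with $\mathbf{i}=(i,i')\in\Z^{2d}$. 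Relatedly, you assign the wrong roles to the three tools. You propose to sum over \emph{both} the shell index $j$ and the in-shell index $k$ via a single Cotlar--Stein application with symbolic calculus supplying cross-shell decay. The paper instead handles the shell sum by weighted Littlewood--Paley theory (Propositions \ref{DyadicRecoupling}--\ref{DyadicDecoupling}); the quantitative symbolic calculus (Theorem \ref{RefinedSymbolicCalculus}) is used only to commute the Littlewood--Paley projection $\Delta_k$ with $T_{a_1}$ (producing remainders $e_k$ with a crucial $2^{-k\epsilon}$ gain); and Cotlar--Stein is applied \emph{within} a single rescaled shell, after conjugation, to the unit-cube family $\{S_{\mathbf{i}}\}_{\mathbf{i}\in\Z^{2d}}$. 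The $M$ factors do not come from ``transferring $w$ across compositions'' as you suggest, but from the forward/reverse weighted Littlewood--Paley inequalities, the initial smoothing $M^3w\mapsto|\Phi_k|\ast M^3w$, and the passage from the scale-dependent averages $\mathcal{A}_{\rho,m,2^k}$ to the scale-free $\mathscr{M}_{\rho,m}$ by taking a supremum in $k$.
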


Our theorem covers the full range of admissible values for $m, \rho, \delta$ except for an endpoint case corresponding to the symbol classes $S_{1,1}^m$. This is to be expected, as it is well known that there are symbols in the class $S^0_{1,1}$ that fail to be bounded on $L^2$, and thus \eqref{WeightedIneqSymbols} would fail on taking $w \equiv 1$.

The maximal operator $\mathscr{M}_{\rho,m}$ may be interpreted as a fractional Hardy--Littlewood maximal function associated with the approach region $\Lambda_{\rho}$, which is a ``truncated" standard cone in the classical case $\rho=1$, and allows a certain order of tangential approach when $0<\rho<1$. In this latter case, the maximal functions $\mathscr{M}_{\rho,m}$ are closely related to those considered by Nagel and Stein \cite{NS} in a different context. 
%
%
Also, these maximal functions are best possible in terms of Lebesgue space bounds. An elementary duality argument and an application of Hölder's inequality reveal that if $a \in S^{m}_{\rho,\delta}$, where $m \in \R$, $0\leq \delta \leq \rho \leq 1$, $\delta <1$, the inequality \eqref{WeightedIneqSymbols} implies
$$
\|T_a\|_{p \to q} \lesssim \|\mathscr{M}_{\rho,m}\|_{(q/2)' \to (p/2)'}^{1/2}
$$
for $p,q \geq 2$. This allows us to transfer $L^p-L^q$ bounds for $\mathscr{M}_{\rho,m}$ to bounds for $T_a$. The bounds satisfied by the maximal operator $\mathscr{M}_{\rho,m}$, which were established in \cite{BB}, allow one to recover the optimal bounds for the symbol classes $S_{\rho,\delta}^m$. In particular, we may reprove that $T_a$ is bounded on $L^p$, for $2\leq p<\infty$, under the condition $m \leq -d(1-\rho)|1/p-1/2|$; see Stein \cite{bigStein} for a review of the bounds on the symbol classes.


These maximal functions are also significant improvements of some variants of the Hardy--Littlewood maximal function. In particular, a crude application of Hölder's inequality in the definition of $\mathscr{M}_{\rho,m}$ reveals the pointwise estimate
\begin{equation}\label{MaximalDomination}
\mathscr{M}_{\rho,m} w \leq (Mw^s)^{1/s}
\end{equation}
when $2sm=(\rho-1)d$, for any $s \geq 1$. On the level of Lebesgue space bounds, the maximal operators $\mathscr{M}_{\rho,m}$ are bounded on $L^s$, for $s>1$, when $2sm=(\rho-1)d$, a property that the maximal functions $(Mw^s)^{1/s}$ do not enjoy. These observations allow us to reconcile Theorem \ref{MainThm} with existing results in the context of weighted $A_p$ theory. In particular, for $s=1$ one may obtain the following.

\begin{corollary}\label{M8}
Let $a \in S^{-d(1-\rho)/2}_{\rho,\delta}$, where $0 \leq \delta \leq \rho \leq 1$, $\delta <1$. Then
\begin{equation}\label{CorWeighted}
\int_{\R^d} |T_a f|^2 w \lesssim \int_{\R^d} |f|^2 M^{8}w.
\end{equation}
\end{corollary}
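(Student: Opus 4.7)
The plan is to deduce this directly from Theorem \ref{MainThm} combined with the pointwise domination \eqref{MaximalDomination} in the case $s=1$. With the hypothesis $m = -d(1-\rho)/2$, the relation $2sm = (\rho-1)d$ is satisfied precisely when $s=1$, so \eqref{MaximalDomination} specialises to $\mathscr{M}_{\rho,m} u(x) \leq M u(x)$ for every non-negative $u \in L^1_{\mathrm{loc}}(\R^d)$.

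First, I would apply Theorem \ref{MainThm} to the weight $w$ (which the hypotheses allow, since $0 \leq \delta \leq \rho \leq 1$ and $\delta < 1$), obtaining
$$
\int_{\R^d} |T_a f|^2 w \lesssim \int_{\R^d} |f|^2\, M^2\mathscr{M}_{\rho,m} M^5 w.
$$
Next, I would apply the pointwise bound $\mathscr{M}_{\rho,m}(M^5 w) \leq M(M^5 w) = M^6 w$ (valid at the chosen $m$), and then use the monotonicity of $M$, namely $M(g_1) \leq M(g_2)$ whenever $g_1 \leq g_2$ pointwise, iterated twice, to conclude
$$
M^2 \mathscr{M}_{\rho,m} M^5 w \leq M^2 M^6 w = M^8 w.
$$
Substituting back gives \eqref{CorWeighted}.

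The argument is essentially immediate once \eqref{MaximalDomination} is available; there is no genuine obstacle beyond verifying that the exponent $m = -d(1-\rho)/2$ is exactly the value at which Hölder's inequality in the definition of $\mathscr{M}_{\rho,m}$ (with $s=1$, so no Hölder is actually used) yields control by $Mw$. In effect, for this specific value of $m$ the fractional normalisation in $\mathscr{M}_{\rho,m}$ matches the volume factor in the definition of $M$, and the truncation $r \in (0,1)$ together with the constraint $|y-x|\leq r^\rho \leq 1$ ensures that every ball $B(y,r)$ contributing to $\mathscr{M}_{\rho,m} u(x)$ is contained in a ball of bounded radius centred at $x$, so the supremum is dominated by $Mu(x)$.
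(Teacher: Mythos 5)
Your argument is correct and is exactly the paper's intended route: apply Theorem \ref{MainThm} and then use the pointwise domination \eqref{MaximalDomination} with $s=1$ (valid precisely at $m=-d(1-\rho)/2$) together with the monotonicity of $M$ to replace $\mathscr{M}_{\rho,m}$ by $M$, collapsing $M^2\mathscr{M}_{\rho,m}M^5$ into $M^8$. The only minor point is that \eqref{MaximalDomination} really gives $\mathscr{M}_{\rho,m}u \lesssim Mu$ (up to a dimensional constant from the ball containment $B(y,r)\subset B(x,2r^\rho)$ and the identity $|B(y,r)|^\rho \sim |B(x,r^\rho)|$), but since \eqref{CorWeighted} has $\lesssim$ this is harmless.
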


The inequalities \eqref{CorWeighted} improve on the existing two-weighted inequalities with controlling maximal function $(Mw^s)^{1/s}$, which are implicit in the works \cite{CT,MRS10} from the elementary observation that $(M w^s)^{1/s} \in A_1$ for any $s>1$. We remark that in the case of the standard symbol class $S^0:=S^0_{1,0}$ and the classes $S^0_{1,\delta}$, with $\delta<1$, the inequality \eqref{CorWeighted} holds with maximal operator $M^3$; this is a consequence of a result of Pérez \cite{Pe94} for Calderón--Zygmund operators. We note that the number of compositions of $M$ in \eqref{WeightedIneqSymbols} and \eqref{CorWeighted} is unlikely to be sharp here and we do not concern ourselves with such finer points in this paper.

If $w \in A_1$, Corollary \ref{M8} immediately yields that $T_a$ is bounded on $L^2(w)$ for $w \in A_1$ and $a \in S^{-d(1-\rho)/2}_{\rho,\delta}$, $0 \leq \delta \leq \rho \leq 1$, $\delta <1$. By standard Rubio de Francia extrapolation theory \cite{GCRdF}, one may recover the best known Muckenhoupt-type weighted estimates for such symbol classes, previously obtained by Chanillo and Torchinksy \cite{CT} in the case $\delta < \rho$, and by Michalowski, Rule and Staubach \cite{MRS10} for $\delta \geq \rho$.

\begin{corollary}[\cite{CT, MRS10}]\label{CT and MRS}
Let $a \in S^{-d(1-\rho)/2}_{\rho,\delta}$, where $0 \leq \delta \leq \rho \leq 1$, $\delta <1$. Then $T_a$ is bounded on $L^p(w)$ for $w \in A_{p/2}$ and $2\leq p < \infty$.
\end{corollary}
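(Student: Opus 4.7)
The plan is to deduce this corollary as a direct consequence of Corollary \ref{M8} combined with the $A_1$ property and Rubio de Francia extrapolation. There are essentially two steps.

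First, I would establish the endpoint case: $T_a$ is bounded on $L^2(w)$ whenever $w \in A_1$. This follows immediately from Corollary \ref{M8}, since the defining property of $A_1$ is that $Mw(x) \lesssim w(x)$ for a.e.\ $x$ (with implicit constant $[w]_{A_1}$). Iterating this eight times gives $M^8 w \lesssim w$ pointwise a.e., so
\begin{equation*}
\int_{\R^d} |T_a f|^2 w \lesssim \int_{\R^d} |f|^2 M^8 w \lesssim \int_{\R^d} |f|^2 w.
\end{equation*}

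Second, I would invoke the version of Rubio de Francia extrapolation (see \cite{GCRdF}) which states that if a (sub)linear operator $T$ satisfies $\|T f\|_{L^{p_0}(w)} \lesssim \|f\|_{L^{p_0}(w)}$ for every $w \in A_1$, with constant depending only on $[w]_{A_1}$, then for every $p \in (p_0, \infty)$ and every $w \in A_{p/p_0}$, one has $\|T f\|_{L^p(w)} \lesssim \|f\|_{L^p(w)}$, with constant depending only on $[w]_{A_{p/p_0}}$. Applying this with $p_0 = 2$ and $T = T_a$ yields exactly the desired boundedness of $T_a$ on $L^p(w)$ for every $w \in A_{p/2}$ and every $p \in (2,\infty)$. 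The case $p = 2$ is already the input hypothesis (since $A_1 = A_{2/2}$).

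Since both steps are standard once Corollary \ref{M8} is in hand, there is no substantial obstacle; the only point worth emphasizing is that the proof bypasses any direct handling of $A_{p/2}$ weights by pushing all the weight theory into the Rubio de Francia machine, leaving the analytic content entirely inside the unweighted estimate \eqref{CorWeighted}.
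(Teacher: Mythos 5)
Your proposal is correct and is essentially the argument the paper uses: the paper likewise deduces the $L^2(w)$, $w \in A_1$ estimate from Corollary \ref{M8} and then invokes Rubio de Francia extrapolation to pass to $L^p(w)$ for $w \in A_{p/2}$ and $2 \leq p < \infty$. There is nothing more to add.
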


Similarly, one may also deduce weighted results of Muckenhoupt-type for the symbol classes $S^m_{\rho,\delta}$ with $-d(1-\rho)/2<m<0$. In view of the pointwise estimate \eqref{MaximalDomination}, one has that $T_a$ is bounded on $L^2(w)$ provided $w^s \in A_1$, where $s=(\rho-1)d/2m$. This may be understood in the context of weights lying in the intersection of the Muckenhoupt and reverse Hölder classes,\footnote{Recall that $w \in RH_s$ if there exists a constant $C$ such that $\big(\frac{1}{|B|} \int_B w^s \big)^{1/s} \leq C \frac{1}{|B|} \int_B w$ for any ball $B$ in $\R^d$.} as $w^s \in A_1$ if and only if $w \in A_1 \cap RH_s$. By an extrapolation theorem of Auscher and Martell \cite{AM}, one quickly deduces the following.

\begin{corollary}
Let $a \in S^{m}_{\rho,\delta}$, where $-d(1-\rho)/2 \leq m <0$ and $0 \leq \delta \leq \rho < 1$. Let $s=(\rho-1)d/2m$. Then $T_a$ is bounded on $L^p(w)$ for all $w \in A_{p/2} \cap RH_{(2s'/p)'}$ and $2 \leq p < 2s'$.
\end{corollary}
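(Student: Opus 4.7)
The plan has two stages: first establish the endpoint $L^2(w)$ bound for $w \in A_1 \cap RH_s$, then invoke the extrapolation theorem of Auscher and Martell \cite{AM} to obtain the full range $2 \leq p < 2s'$.

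For the endpoint, I would apply Theorem \ref{MainThm} to bound
$$\int_{\R^d} |T_a f|^2 w \lesssim \int_{\R^d} |f|^2 \, M^2 \mathscr{M}_{\rho,m} M^5 w.$$
The choice $s = (\rho-1)d/(2m)$ forces $2sm = (\rho-1)d$, so the pointwise estimate \eqref{MaximalDomination} gives
$$\mathscr{M}_{\rho,m} M^5 w \leq \bigl(M (M^5 w)^s \bigr)^{1/s}.$$
The standard identification $w \in A_1 \cap RH_s \Leftrightarrow w^s \in A_1$ allows us to collapse the iterated maximal functions: since $w \in A_1$ we have $M^5 w \lesssim w$ with constant depending on $[w]_{A_1}$, hence $(M^5 w)^s \lesssim w^s$, and then $w^s \in A_1$ yields $M((M^5 w)^s) \lesssim w^s$, so that $(M((M^5 w)^s))^{1/s} \lesssim w$. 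One final application of $w \in A_1$ handles the outer $M^2$, giving $\int |T_a f|^2 w \lesssim C(w) \int |f|^2 w$ with $C(w)$ depending only on $[w]_{A_1}$ and $[w^s]_{A_1}$.

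For the second stage, I would apply the limited-range extrapolation theorem of Auscher and Martell with lower parameter $r=2$ and upper parameter $2s'$: at $p_0 = 2$ the required class $A_{p_0/2} \cap RH_{(2s'/p_0)'} = A_1 \cap RH_s$ matches exactly the hypothesis established in the first stage, and the conclusion delivers boundedness on $L^p(w)$ for all $w \in A_{p/2} \cap RH_{(2s'/p)'}$ and $2 \leq p < 2s'$.

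The only step requiring any care is verifying that the $L^2(w)$-bound constant in stage one has the quantitative dependence on $[w]_{A_1}$ and $[w^s]_{A_1}$ needed to feed into the Auscher--Martell machinery; this is however transparent from the chain of pointwise inequalities above. Everything else is a direct assembly of Theorem \ref{MainThm}, the pointwise domination \eqref{MaximalDomination}, and the standard extrapolation theorem.
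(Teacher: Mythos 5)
Your proposal is correct and follows essentially the same route as the paper: invoke Theorem \ref{MainThm}, use the pointwise domination \eqref{MaximalDomination} with $2sm=(\rho-1)d$ together with $w\in A_1\cap RH_s \Leftrightarrow w^s\in A_1$ to collapse the iterated maximal functions and obtain the $L^2(w)$ bound, then feed this into the Auscher--Martell limited-range extrapolation with parameters $r=2$ and $2s'$. The paper simply leaves the pointwise-collapse chain implicit, whereas you have usefully spelled out the constant dependence on $[w]_{A_1}$ and $[w^s]_{A_1}$.
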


Of course this contains Corollary \ref{CT and MRS} when $m=-d(1-\rho)/2$. The estimates for the range $-d(1-\rho)/2<m<0$ appear to be new to the best of our knowledge. It is interesting to compare them with \cite[Theorem 3.11]{MRS10}.


As a contextual remark, we should add that our study of weighted inequalities of the type \eqref{WeightedIneqSymbols} is motivated by a question raised by Stein in the late 1970s. In \cite{Stein79}, he suggested the possibility that the disc multiplier might be controlled via a general weighted $L^2$ inequality by some variant of the universal maximal function
$$
\CN w(x):= \sup_{T \in x} \frac{1}{|T|} \int_T w;
$$
here the supremum is taken over all rectangles $T$ containing the point $x$. This conjecture is still very much open for $d\geq 2$, although positive results were obtained in the case of radial weights by Carbery, Romera and Soria \cite{CRS}. A similar question was also raised by Córdoba \cite{CordKak} in the more general context of the Bochner--Riesz multipliers. For further progress concerning this conjecture we refer the interested reader to the work of Carbery \cite{CarWeight}, Christ \cite{Christ}, Carbery and Seeger \cite{CS}, Bennett, Carbery, Soria and Vargas \cite{BCSV} or Lee, Rogers and Seeger \cite{LRSw}; see also Córdoba and Rogers \cite{CR2014} for a weighted inequality in a related oscillatory context.

In a more classical context, such types of weighted inequalities were first studied by Fefferman and Stein \cite{FS} for the Hardy--Littlewood maximal function $M$, with the controlling maximal function being $M$ itself. In the framework of Calderón--Zygmund operators, this question was addressed by Córdoba and Fefferman \cite{CF}, Wilson \cite{Wi89} and Pérez \cite{Pe94}, where the controlling maximal function is a minor variant of $M$. A similar result was recently proved by the author \cite{Bel2015} for the Carleson operator, combining some of the ideas developed by Pérez in \cite{Pe95} with the recent developments connecting Calderón--Zygmund theory with sparse operators; see for instance the work of Lerner \cite{LeA2,LeNew}, Di Plinio and Lerner \cite{DiPL} or Lacey \cite{Lac2015}. 


\textit{Structure of the paper.} The paper is organised as follows. In Section \ref{sec:Multiplier} we discuss our strategy for attacking the problem. Section \ref{sec:Preliminaries} contains some preliminary results to which we will appeal in the proof of Theorem \ref{MainThm}. In Section \ref{sec:ProofMain} we give a detailed proof of our main result, which relies on auxiliary results proved in Sections \ref{sec:origin} and \ref{sec:ProofDyadicPiece}. Some appendices are included at the end of this paper for completeness. 

\textit{Acknowledgements.} The author would like to thank his supervisor Jon Bennett for stimulating conversations and valuable comments on the exposition of this paper. He also thanks Alessio Martini for an interesting discussion.

\section{Proof strategy}\label{sec:Multiplier}

A precedent for Theorem \ref{MainThm} is the work of Bennett and the author \cite{BB}, who established weighted inequalities for certain classes of Fourier multipliers. Among many things, they showed that given $0 \leq \alpha \leq 1$ and $\beta \in \R$, if $m:\R^d \to \mathbb{C}$ is a function supported in $\{\xi \in \R^d : |\xi| \geq 1\}$ satisfying the differential inequalities
\begin{equation}\label{mult}
|D^\sigma m(\xi)|\lesssim |\xi|^{-\beta-(1-\alpha)|\sigma|}
\end{equation}
for all multi-indices $\sigma \in \N^d$ such that $|\sigma| \leq \lfloor \frac{d}{2} \rfloor +1$, the operator $T_m$ associated to the Fourier multiplier $m$ satisfies the weighted inequality \eqref{WeightedIneqSymbols}.\footnote{The results in \cite{BB} hold for any value of $\alpha \in \R$, although this will not be relevant in this paper. Indeed, the weighted inequalities there follow from stronger \textit{pointwise} results.}

In order to establish that result, the concept of \textit{subdyadic} balls proved to be crucial. We say that a euclidean ball $B$ in $\mathbb{R}^d$ is $\alpha$-\textit{subdyadic} if $\dist(B,0)\geq 1$ and
$$r(B) \sim \dist(B,0)^{1-\alpha},$$
where $r(B)$ denotes the radius of $B$. Observe that for $\alpha>0$, typically $r(B)\ll\dist(B,0)$, making it natural to refer to such balls as subdyadic. The prototypical example of a decomposition of $\{\xi \in \R^d : |\xi|\geq 1\}$ into $\alpha$-subdyadic balls is to decompose the space into dyadic annuli $A_k:=\{\xi \in \R^d: |\xi| \sim 2^k\}$ and cover each $A_k$ by a family of $O(2^{\alpha dk})$ balls of radius $O(2^{k(1-\alpha)})$ with bounded overlap.\footnote{Observe that the case $\alpha=0$ reduces to the case of a standard decomposition of $\R^d$ into dyadic balls.} This two-stage decomposition example is implicitly used in the theory of pseudodifferential operators, as it may be extracted from Stein \cite{bigStein}. We refer to \cite{BB} for a more detailed description of the subdyadic decomposition, which has its roots in the sequence of papers \cite{BCSV,BH,Ben2014}.

The reason to decompose the frequency space into subdyadic balls is that the multipliers $m$ satisfying the differential inequalities \eqref{mult} are effectively constant on such balls. A manifestation of that principle is that it is possible to prove, with rather elementary techniques, that if $B$ is an $\alpha$-subdyadic ball and $f_B$ is a function Fourier supported in $B$ (or in a slightly enlargement of $B$) then
\begin{equation}\label{WeightedLocalised}
\int_{\R^d} |T_m f_B|^2 w \lesssim \dist(B,0)^{-2\beta} \int_{\R^d} |f_B|^2 Mw,
\end{equation}
with constant independent of the ball $B$. Of course, this corresponds to an analogue of Theorem \ref{MainThm} over the class of functions $f$ with that specific Fourier support.

The idea then is to use suitable Littlewood--Paley type decompositions to reduce the proof of the weighted estimates \eqref{WeightedIneqSymbols} to the estimate \eqref{WeightedLocalised}. This comes in two stages, with the first one being a reduction, via a standard Littlewood--Paley decomposition, to functions $f_k$ whose Fourier transforms are supported in the dyadic annuli $A_k$.

The reduction from $f_k$ to the functions $f_B$ is more subtle. Observe that if $B$, $B'$ are $\alpha$-subdyadic balls lying on the same dyadic annulus $A_k$, then $r(B)\sim r(B') \sim 2^{k(1-\alpha)}$. Now, imagine the following situation. Let $f_B$, $f_{B'}$ be functions whose Fourier support lies in $B$ and $B'$ respectively. Let $\widetilde{w}$ be a weight function with Fourier support lying in a ball centered at the origin of radius $r(B)\sim r(B') \sim 2^{k(1-\alpha)}$. Then, Parseval's theorem reveals the orthogonality property
$$
\int_{\R^d} f_B \overline{f_{B'}} \widetilde{w} = \int_{\R^d} \widehat{f_B} \widehat{\overline{f_{B'}}} \ast \widehat{\widetilde{w}}= 0
$$
if $\dist(B,B') \gtrsim r(B)$. As $T_m$ is a convolution operator, the frequency variables of $f$ and $T_mf$ are the same, and this orthogonality remains valid for $T_m f_B$ and $T_m f_{B'}$. This means that only ``diagonal" terms contribute to the whole sum, that is,
$$
\int_{\R^d} |\sum_{r(B) \sim 2^{k(1-\alpha)}}T_mf_B|^2 \widetilde{w}=\int_{\R^d} \sum_{r(B) \sim r(B') \sim 2^{k(1-\alpha)} } T_m f_B \overline{T_m f_{B'}} \widetilde{w} \sim \int_{\R^d} \sum_{r(B) \sim 2^{k(1-\alpha)} } |T_m f_B|^2 \widetilde{w},
$$
and for such diagonal terms, one may invoke the easier inequality \eqref{WeightedLocalised}.


In view of the above discussion, the similarity of the differential inequalities \eqref{symbol} satisfied by a symbol $a$ to those satisfied by the multipliers $m$ (see \eqref{mult}) suggests a decomposition of the $\xi$-space, where $\xi$ corresponds to the frequency variable of $f$, into $(1-\rho)$-subdyadic balls. However, $T_a$ is a non-translation-invariant operator. This adds complexity with respect to the Fourier multiplier case, and more delicate arguments seem to be required. In particular, observe that the frequency variables of $f$ and $T_af$ are not the same; this was a key property satisfied by Fourier multiplier operators in the above analysis.

The key idea for the pseudodifferential operators case will be that despite $T_a f_B$ and $T_a f_{B'}$ not being orthogonal with respect to the weight $\widetilde{w}$, the contribution of the ``off-diagonal terms" to the term 
$$
\int_{\R^d} |\sum_{r(B) \sim 2^{k(1-\alpha)}}T_af_B|^2 \widetilde{w}
$$
is very small if $\dist(B,B')\gtrsim r(B)$. This may be seen as a certain almost orthogonality property between the pieces $T_{a_B}$ and $T_{a_{B'}}$. To exploit this, it will be appropriate to make use of the Cotlar--Stein almost orthogonality principle.


Of course, the previous ideas require us to find a suitable weight $\widetilde{w}$ controlling an essentially arbitrary weight $w$. The weight $\widetilde{w}$, as described above, will only be effective to obtain almost orthogonality properties between $T_{a}f_B$ and $T_{a} f_{B'}$ if $B$ and $B'$ are subdyadic balls satisfying $r(B) \sim r(B')$, that is, if $B$ and $B'$ lie on the same dyadic annulus. Thus, after a dyadic Littlewood--Paley type reduction, we control $w$ by a suitably band-limited weight $\widetilde{w_k}$ in each dyadic annulus $A_k$. The weights $\widetilde{w_k}$ satisfy the above properties, and taking the supremum over all $k \geq 0$ will give raise to the maximal functions $\mathscr{M}_{\rho,m}$. Due to the non-translation-invariant nature of $T_a$, Littlewood--Paley theory will not suffice for our purposes, and a quantitative version of the \textit{symbolic calculus} will also be needed.

Observe that a $(1-\rho)$-subdyadic decomposition is only suitable in $\{\xi \in \R^d : |\xi| \geq 1\}$. This may be easily overcome, as a symbol $a(x,\xi)$ satisfying the differential inequalities \eqref{symbol} behaves differently in the regions $\{|\xi| \leq 1\}$ and $\{|\xi|\geq 1\}$. Observe that the differential inequalities \eqref{symbol} on $\{|\xi| \leq 1\}$ become
$$
|\partial_x^\nu \partial_\xi^\sigma a(x,\xi)| \lesssim 1
$$
for all multi-indices $\nu, \sigma \in \N^d$. When $a$ is supported in $\{|\xi| \leq 1\}$, these will suffice to deduce an appropriate two-weighted inequality for $T_{a}$ by elementary means.

\section{Preliminaries}\label{sec:Preliminaries}

The aim of this section is to provide the reader with some standard results, or minor variants of them, to which we shall appeal in the proof of Theorem \ref{MainThm}.

\subsection{Weighted Littlewood--Paley theory}

Here we present forward and reverse weighted $L^2$ inequalities for a dyadic square function of Littlewood--Paley type. We will use this to reduce the proof of Theorem \ref{MainThm} to the class of functions whose Fourier support lies in a dyadic annulus.

Let $P:\R^d \to \R$ be a smooth function such that $\supp(\widehat{P}) \subseteq \{\xi \in \R^d: 3/4 \leq |\xi| \leq 3\}$. For any $k \in \Z$, let $P_k$ be defined by $\widehat{P}_k(\xi)=\widehat{P}(2^{-k}\xi)$ and let $\Delta_k$ be the operator given by 
$$
\widehat{\Delta_k f}(\xi)= \widehat{P}_k(\xi)\widehat{f}(\xi).
$$
The following forward estimate follows from a more general result of Wilson \cite{Wi07}; see also \cite{BH,Ben2014} for similar formulations of the statement.

\begin{proposition}[\cite{Wi07}]\label{DyadicRecoupling}
$$
\int_{\R^d} \sum_{k \in \Z} |\Delta_k f|^2 w \lesssim \int_{\R^d} |f|^2 Mw.
$$
\end{proposition}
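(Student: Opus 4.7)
The plan is to recognise the sequence $(\Delta_k f)_{k \in \Z}$ as a vector-valued Calder\'on--Zygmund object and to appeal to the weighted Littlewood--Paley inequality of Wilson \cite{Wi07}. I would set $\mathbf{T}f := (\Delta_k f)_{k \in \Z}$, regarded as a map from $L^2(\R^d)$ to $L^2(\R^d; \ell^2(\Z))$, so that $|\mathbf{T}f|_{\ell^2}^2 = \sum_k |\Delta_k f|^2$. The unweighted $L^2 \to L^2(\ell^2)$ bound is immediate from Plancherel and the bounded overlap property of the projections $\widehat{P}_k(\xi) = \widehat{P}(2^{-k}\xi)$: since $\supp \widehat{P} \subset \{3/4 \leq |\xi| \leq 3\}$, one has $\sum_k |\widehat{P}_k(\xi)|^2 \lesssim 1$.

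The second step is to verify the standard $\ell^2$-valued size and H\"ormander-type regularity bounds for the convolution kernel $\mathbf{K}(z) := (P_k(z))_k$, where $P_k(z) = 2^{kd} P(2^k z)$. Splitting the sum at the scale $2^k \sim |z|^{-1}$ and using the Schwartz decay of $P$ (which follows from $\widehat{P} \in C_c^\infty$), one would obtain
$$
\Bigl(\sum_k |P_k(z)|^2\Bigr)^{1/2} \lesssim |z|^{-d}, \qquad \Bigl(\sum_k |\nabla P_k(z)|^2\Bigr)^{1/2} \lesssim |z|^{-d-1}.
$$
These are the classical Littlewood--Paley kernel estimates and place $\mathbf{T}$ squarely within the framework to which Wilson's theorem applies.

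Finally, I would invoke Wilson's weighted inequality, whose conclusion in our setting is precisely $\int |\mathbf{T}f|_{\ell^2}^2\, w \lesssim \int |f|^2 Mw$. If one preferred a self-contained treatment, one could instead run the scalar Fefferman--Stein/C\'ordoba--Fefferman argument in the $\ell^2$-valued setting, combining the pointwise sharp-maximal bound $M^\#(|\mathbf{T}f|_{\ell^2}) \lesssim Mf$ with duality in $L^2(w)$; this produces the analogous inequality but with $M^2 w$ on the right. The main obstacle is therefore the refinement from $M^2 w$ down to $Mw$, which exploits the orthogonality between the dyadic pieces $\Delta_k$ and is exactly the content of Wilson's improvement over the generic vector-valued Fefferman--Stein bound; apart from the (routine) kernel bookkeeping, this is the only non-trivial point and the reason the result is cited rather than proved from scratch.
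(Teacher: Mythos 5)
Your proposal and the paper take the same route: the paper does not prove Proposition~\ref{DyadicRecoupling} but simply cites it as a consequence of a more general theorem of Wilson~\cite{Wi07} (pointing also to \cite{BH,Ben2014} for similar formulations), and your plan amounts to verifying that the kernel $\mathbf{K}(z)=(P_k(z))_k$ satisfies the $\ell^2$-valued size and regularity estimates needed for Wilson's theorem to apply and then invoking that theorem. Your accompanying remarks — that the unweighted $L^2(\ell^2)$ bound is immediate from bounded overlap of the $\widehat{P}_k$, and that the generic vector-valued sharp-maximal/Fefferman--Stein route would only yield $M^2w$ on the right, so that obtaining $Mw$ is precisely the content of Wilson's square-function refinement — are accurate and consistent with the paper's treatment.
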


The reverse estimate is slightly less standard, and corresponds to a $d$-dimensional version of a result in \cite{BH}. Here we assume that the functions $\{\widehat{P}_k\}_{k \in \Z}$ define a partition of unity, that is 
$$
\sum_{k \in \Z} \widehat{P}(2^{-k}\xi)=1.
$$

\begin{proposition}[\cite{BH}]\label{DyadicDecoupling}
$$
\int_{\R^d} |f|^2 w \lesssim \int_{\R^d} \sum_{k \in \Z} |\Delta_kf|^2  M^3w.
$$
\end{proposition}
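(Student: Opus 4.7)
The plan is to prove the estimate via $L^2(w)$-duality combined with a Calderón-type reproducing formula. Introduce a slightly enlarged Littlewood--Paley family $\{\widetilde{P}_k\}_{k\in\Z}$ with $\widehat{\widetilde{P}}_k\equiv 1$ on $\supp(\widehat{P}_k)$ and supported in a slight dilate, and let $\widetilde{\Delta}_k f=\widetilde{P}_k\ast f$. Since $\widetilde{\Delta}_k\Delta_k=\Delta_k$, the partition of unity $\sum_k\widehat{P}_k=1$ yields the reproducing identity $f=\sum_k\widetilde{\Delta}_k\Delta_k f$. Write $Sf=(\sum_k|\Delta_k f|^2)^{1/2}$ and $\widetilde{S}h=(\sum_k|\widetilde{\Delta}_k h|^2)^{1/2}$ for the square functions attached to the two families.

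By duality, it suffices to bound $|\int f\bar g\,w|$ by $\|Sf\|_{L^2(M^3w)}\|g\|_{L^2(w)}$ for every $g$. Substituting the reproducing identity and using the self-adjointness of $\widetilde{\Delta}_k$ gives
$$
\int f\bar g\,w=\sum_k\int\Delta_k f\cdot\widetilde{\Delta}_k(\bar g w).
$$
A pointwise Cauchy--Schwarz in the index $k$, followed by a weighted Cauchy--Schwarz in $L^2$ against the weight $M^3w$ and its reciprocal, reduces the task to establishing
$$
\int\widetilde{S}(\bar g w)^2\,\frac{1}{M^3w}\,dx\;\lesssim\;\int|g|^2 w\,dx.
$$

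The hard part is this last inequality. The naive route---applying Proposition~\ref{DyadicRecoupling} to $h=\bar g w$ with weight $1/M^3w$---leaves a factor $w\cdot M(1/M^3w)$ on the right-hand side, and this factor can fail to be pointwise bounded, as one sees by testing against $w=\chi_{B(0,1)}$. The three iterations of $M$ are present precisely to provide the room needed to circumvent this obstruction. I would exploit the frequency localization of each $\widetilde{\Delta}_k(\bar g w)$ via a pointwise Cauchy--Schwarz inside the defining convolution, separating the $g$- and $w$-factors and controlling each by a maximal function at the corresponding scale; then combine the Coifman--Rochberg observation that $(M^3w)^\delta\in A_1\subset A_\infty$ for every $\delta<1$ with Wilson's weighted equivalence $\int|f|^2v\sim\int Sf^2 v$ for $v\in A_\infty$ to absorb the remaining factor and close the estimate.
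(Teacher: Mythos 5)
The paper does not prove Proposition~\ref{DyadicDecoupling}; it cites Bennett and Harrison \cite{BH}, so one cannot compare line by line. Evaluated on its own terms, your opening reductions are fine: the reproducing identity $f=\sum_k\widetilde{\Delta}_k\Delta_kf$, the duality pairing, and the two Cauchy--Schwarz applications correctly reduce the claim to
$$
\int_{\R^d}\widetilde{S}(\bar gw)^2\,\frac{1}{M^3w}\;\lesssim\;\int_{\R^d}|g|^2w,
$$
and you are right that the naive route via Proposition~\ref{DyadicRecoupling} stalls because $w\cdot M(1/M^3w)$ is not pointwise bounded.

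The gap is in the closing step. You invoke ``Wilson's weighted equivalence $\int|f|^2v\sim\int(Sf)^2v$ for $v\in A_\infty$'' in the direction $\int\widetilde{S}(h)^2v\lesssim\int|h|^2v$ with $v=1/M^3w$, but that direction is \emph{not} valid for general $A_\infty$ weights: in $L^2$, the forward square-function bound with a single fixed weight is essentially an $A_2$ statement, and for $A_\infty$ only the reverse inequality $\int|f|^2v\lesssim\int(Sf)^2v$ holds (Chang--Wilson--Wolff, Wilson). Coifman--Rochberg does give $(M^3w)^\delta\in A_1$ for $\delta<1$, whence $(M^3w)^{-1}\in RH_\infty\subset A_\infty$; but $RH_\infty\not\subset A_2$ (for instance $1+|x|$ on the line lies in $RH_\infty\setminus A_2$), so the forward bound you need can fail. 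Nor can you flip to $v=M^3w$ and use the valid $A_\infty$ direction via $\int|f|^2w\leq\int|f|^2M^3w$, because $(M^3w)^\delta\in A_1$ for all $\delta<1$ does not imply $M^3w\in A_\infty$ --- $A_\infty$ is not closed under raising to powers $\geq1$ (for example $|x|^{-d\delta}\in A_1$ for every $\delta<1$, yet $|x|^{-d}$ is not an $A_\infty$ weight). Finally, the ``pointwise Cauchy--Schwarz inside the convolution'' step, as sketched, runs into the obstruction that $\sum_k|\widetilde{P}_k(z)|\sim|z|^{-d}$ is not integrable, so pulling out a maximal function of $w$ scale by scale does not sum over $k$. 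Some further mechanism --- genuinely exploiting the mutual frequency disjointness of the $\widetilde{\Delta}_k$, or a two-weight square-function bound outside the $A_p/A_\infty$ framework --- is needed to close the argument, and the proposal as written does not supply it.
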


\subsection{Composition of a frequency cut-off with a pseudodifferential operator}\label{subsec:symbcalc}

The composition structure of pseudodifferential operators has been extensively studied; we refer to the work of Hörmander \cite{Hormander} in the case of the symbol classes $S^m_{\rho,\delta}$. We require the following quantitative version when the outermost symbol is a cut-off function on the frequency space adapted to a dyadic annulus.

\begin{theorem}\label{RefinedSymbolicCalculus}
Let $\varphi \in \CS$ be such that $\supp(\widehat{\varphi}) \subseteq \{|\xi|\sim 1\}$ and given $R>1$, let $\varphi_R$ be defined by $\widehat{\varphi}_R(\xi):=\widehat{\varphi}(R^{-1}\xi)$. Let $a \in S_{\rho,\delta}^{m}$, where $0 \leq \delta \leq \rho$ and $\delta < 1$. Then, there exists a symbol $c \in S^{m}_{\rho,\delta}$ such that
$$
T_c=T_{\widehat{\varphi}_R} \circ T_a.
$$
Moreover, for $\epsilon \geq 0$ and $\kappa >0$, the symbol
$$
e^N:=c-\sum_{|\gamma|<N} \frac{i^{-|\gamma|}}{\gamma!} \partial_\xi^\gamma \widehat{\varphi}_R \partial_x^\gamma a \in S^{m-N(1-\delta)+d\delta+\kappa \delta+\epsilon}_{\rho,\delta}
$$
for all $N>\frac{d\delta +\kappa \delta + \epsilon}{1-\delta}$, and satisfies
\begin{equation}\label{ErrorDiffIneq}
|\partial_x^\nu \partial_\xi^\sigma e^N(x,\xi)|\lesssim R^{-\epsilon} (1+|\xi|)^{m-N(1-\delta)+d\delta+\kappa \delta+\epsilon -|\sigma|\rho + |\nu|\delta}
\end{equation}
for any multi-indices $\nu,\sigma \in \N^d$. 
\end{theorem}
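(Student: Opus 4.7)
The plan is organized around an explicit oscillatory-integral representation of the composition symbol. Since $\widehat\varphi_R$ depends only on $\xi$, the operator $T_{\widehat\varphi_R}$ is a Fourier multiplier acting by convolution with $\varphi_R$, and a direct computation on Schwartz inputs identifies $T_{\widehat\varphi_R}\circ T_a$ with $T_c$, where
$$
c(x,\xi)=\int_{\R^d}\varphi_R(y)\,e^{-iy\cdot\xi}\,a(x-y,\xi)\,dy.
$$
The Schwartz decay of $\varphi_R$ against the uniform boundedness of $a$ in $x$ makes this integral absolutely convergent, and $c\in C^\infty$ by differentiation under the integral sign.

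Next, I would apply Taylor's theorem with integral remainder to $a(x-y,\xi)$ around $y=0$, to order $N$, and substitute into the formula for $c$. For the polynomial part, the Fourier identity $\int \varphi_R(y)\,y^\gamma e^{-iy\cdot\xi}\,dy = i^{|\gamma|}\partial_\xi^\gamma\widehat\varphi_R(\xi)$ together with $(-1)^{|\gamma|}i^{|\gamma|}=i^{-|\gamma|}$ reproduces exactly the stated asymptotic $\sum_{|\gamma|<N}\tfrac{i^{-|\gamma|}}{\gamma!}\partial_\xi^\gamma\widehat\varphi_R\,\partial_x^\gamma a$, while the Taylor remainder gives
$$
e^N(x,\xi)=N\sum_{|\gamma|=N}\frac{(-1)^{|\gamma|}}{\gamma!}\int_0^1(1-t)^{N-1}\int_{\R^d}\varphi_R(y)\,y^\gamma e^{-iy\cdot\xi}\partial_x^\gamma a(x-ty,\xi)\,dy\,dt.
$$
The identity $\varphi_R(y)y^\gamma = R^{-|\gamma|}(\psi^\gamma)_R(y)$, with $\psi^\gamma(u):=u^\gamma\varphi(u)\in\CS$ and $(\psi^\gamma)_R(y):=R^d\psi^\gamma(Ry)$, then immediately extracts the leading $R^{-N}$ decay.

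To establish \eqref{ErrorDiffIneq} for $e^N$ and its derivatives $\partial_x^\nu\partial_\xi^\sigma e^N$, I would combine two complementary estimates. In the regime $|\xi|\lesssim R$, a crude $L^1\cdot L^\infty$ bound using the symbol estimate $|\partial_x^{\gamma+\nu}\partial_\xi^\sigma a|\lesssim (1+|\xi|)^{m-\rho|\sigma|+\delta(N+|\nu|)}$ together with the $(\psi^\gamma)_R$ scaling produces $R^{-N}(1+|\xi|)^{m-\rho|\sigma|+\delta(N+|\nu|)}$, and trading $N-\epsilon$ powers of $R$ against $(1+|\xi|)\le R$ yields the target. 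In the regime $|\xi|\gtrsim R$, I would exploit the oscillation in $e^{-iy\cdot\xi}$ by iterating the transpose operator $L^*=-i\xi\cdot\nabla_y/|\xi|^2$; each integration by parts trades $|\xi|^{-1}$ for either an $R$-factor (when the derivative lands on $(\psi^\gamma)_R$) or a $(1+|\xi|)^\delta$-factor (when it lands on $a$), for a net gain of $(R/|\xi|)^K$ modulo lower-order terms. Derivatives in $x$ and $\xi$ are then taken under the integral sign; $\partial_\xi$ falling on $e^{-iy\cdot\xi}$ contributes extra $y^{\sigma_1}$-factors that, after rescaling, give additional $R^{-|\sigma_1|}$, comparable to the $(1+|\xi|)^{-\rho|\sigma_2|}$ gains from $\partial_\xi$ falling on $a$.

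The main obstacle will be the careful bookkeeping in the $|\xi|\gtrsim R$ regime: making sure that, for each combination of $\nu$ and $\sigma$, one can choose $K$ compatibly with $N,\kappa,\epsilon$ to produce exactly the exponent $m-N(1-\delta)+d\delta+\kappa\delta+\epsilon-\rho|\sigma|+\delta|\nu|$ together with the prefactor $R^{-\epsilon}$, the additive $d\delta+\kappa\delta$ in the class order precisely absorbing the subleading multinomial contributions produced by the iterated product rule. The threshold $N>(d\delta+\kappa\delta+\epsilon)/(1-\delta)$ arises because below it the claimed class index no longer decreases in $N$.
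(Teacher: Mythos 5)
Your approach is correct in outline but genuinely different from the paper's, so let me compare the two and flag a couple of minor points.

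The paper writes the composed symbol on the frequency side as $c(x,\xi)=\int \widehat{\varphi}_R(\xi+\eta)\widehat{a}(\eta,\xi)e^{ix\cdot\eta}\,d\eta$, Taylor expands $\widehat{\varphi}_R(\xi+\eta)$ in $\eta$ around $\xi$, and controls the remainder using integration-by-parts decay of $\widehat{a}(\eta,\xi)$ in $\eta$, splitting into $|\xi|\geq 2|\eta|$ and $|\xi|\leq 2|\eta|$. Because $\widehat{a}$ must first be given meaning, the paper first treats $a$ compactly supported in $x$ and then removes that restriction via a localization $a=\theta a+(1-\theta)a$ around an arbitrary $x_0$. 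Your route is the physical-space dual: since $T_{\widehat{\varphi}_R}$ is convolution with $\varphi_R$, you write $c(x,\xi)=\int\varphi_R(y)e^{-iy\cdot\xi}a(x-y,\xi)\,dy$, which is absolutely convergent for any $a\in S^m_{\rho,\delta}$ without preliminary localization, and you Taylor expand $a(x-y,\xi)$ in $y$ around $0$. The polynomial part reproduces the same asymptotic sum via the identity $\int\varphi_R(y)y^\gamma e^{-iy\cdot\xi}\,dy=i^{|\gamma|}\partial_\xi^\gamma\widehat{\varphi}_R(\xi)$, exactly as you say. Your remainder estimate then splits at $|\xi|\sim R$ rather than $|\xi|\sim|\eta|$: for $|\xi|\lesssim R$ a crude $L^1\cdot L^\infty$ bound together with $\varphi_R(y)y^\gamma=R^{-N}(\psi^\gamma)_R(y)$ gives $R^{-N}(1+|\xi|)^{m+\delta N}$, which after trading $R^{-(N-\epsilon)}\lesssim(1+|\xi|)^{-(N-\epsilon)}$ is strictly better than the target; for $|\xi|\gtrsim R$ you exploit the oscillation via $K$ applications of $L=i\xi\cdot\nabla_y/|\xi|^2$. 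Each hit on $(\psi^\gamma)_R$ contributes $R/|\xi|$ and each hit on $\partial_x^\gamma a$ contributes $(1+|\xi|)^{\delta-1}$; balancing the resulting $\sum_{\ell\le K}R^{K-N-\ell}(1+|\xi|)^{m+\delta N+\delta\ell-K}$ against the target shows the estimate holds once $K(1-\delta)\geq N-d\delta-\kappa\delta-\epsilon+|\sigma|$, which is compatible with the hypothesis $N>\tfrac{d\delta+\kappa\delta+\epsilon}{1-\delta}$. So the bookkeeping you flag as the main obstacle does close, for each fixed $\nu,\sigma$, with an appropriate (finite) choice of $K$. The practical upshot of your approach is that it bypasses the paper's compact-support-plus-localization step entirely, at the cost of a somewhat heavier (but standard) stationary-phase bookkeeping in the $|\xi|\gtrsim R$ regime.

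Two small corrections. First, with your sign conventions $L^*$ should be $+i\xi\cdot\nabla_y/|\xi|^2$ so that $L^*e^{-iy\cdot\xi}=e^{-iy\cdot\xi}$; with the minus sign you get a factor $-1$. Second, the stated rationale for the threshold $N>\tfrac{d\delta+\kappa\delta+\epsilon}{1-\delta}$ is not quite right: the class index $m-N(1-\delta)+d\delta+\kappa\delta+\epsilon$ is strictly decreasing in $N$ for all $\delta<1$. The threshold instead guarantees that this index drops below $m$ (so that $e^N$ is genuinely of lower order), and, in the paper's argument, that the convergence exponents $-2M_2+N+d<0$ can be met; in your argument it plays the analogous role of making the choice of $K$ nonvacuous and the two regimes consistent.

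Finally, you should also justify briefly that $T_a:\mathcal S\to\mathcal S$ for $a\in S^m_{\rho,\delta}$ with $\delta<1$ (this is what makes the double integral defining $T_{\widehat{\varphi}_R}(T_a f)$ absolutely convergent for $f\in\mathcal S$ so that Fubini applies), and that differentiation under the integral sign is legitimate; both are standard but worth a sentence since they replace the compact-support scaffold the paper erects for the same purpose.
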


This very specific version of the more general symbolic calculus in \cite{Hormander} allows us to obtain quantitative control for the differential inequalities satisfied by the error term $e^N$ in terms of $R$, which corresponds to the scale of the frequency projection $\varphi_R$. The implicit constants in \eqref{ErrorDiffIneq} depend on finitely many $C^k$ norms of $\widehat{\varphi}$ and on the implicit constants in the differential inequalities \eqref{symbol} satisfied by $a$, and they will be acceptable for our purposes for being independent of the parameter $R$.

We remark that the order of the error symbol $e^N$ in Theorem \ref{RefinedSymbolicCalculus} is not necessarily sharp here, but one may choose $N$ sufficiently large so that $e^N$ has sufficiently large negative order. Modulo such an error term, we may understand the composition of $\varphi_R$ with a pseudodifferential operator as the action of the pseudodifferential operator itself, and some other pseudodifferential operators of lower order, on functions with frequency support on the dyadic annulus $\{ |\xi|\sim R\}$. We provide the proof of Theorem \ref{RefinedSymbolicCalculus} in Appendix \ref{app:SymbCalc} for completeness, which consists of a careful modification of the symbolic calculus developed in \cite{bigStein} for the standard symbol classes $S^m$.

%

\subsection{The kernel of a pseudodifferential operator}\label{subsec:kernel}

%
%
%

A pseudodifferential operator with symbol of sufficiently negative order is to all intents and purposes a convolution operator with an integrable kernel. This is an easy consequence of the following observation in Hörmander \cite{Hormander}. Let $a \in S^m_{\rho,\delta}$, $m \in \R, 0\leq \delta, \rho \leq 1$, $\delta<1$ and let $K(x,y)$ denote the distribution kernel of $T_a$. Then if $\gamma \in \N^d$ satisfies $m-|\gamma|\rho<-d$, the distribution $(x-y)^\gamma K(x,y)$ coincides with a function,
\begin{equation}\label{proofKernel}
(x-y)^\gamma K(x,y) = \int_{\R^d} e^{i(x-y)\cdot \xi} (-iD_\xi)^\gamma a(x,\xi) d\xi.
\end{equation}
In view of the differential inequalities \eqref{symbol}, this quickly allows us to deduce that if a symbol $a \in S^m_{\rho,\delta}$ has sufficiently negative order, that is, $m <-d$, then
\begin{equation*}
|K(x,y)|\lesssim \frac{1}{(1+|x-y|^2)^{L/2}}
\end{equation*}
for any $L\geq 0$. In particular, taking $L>d$, one may control the pseudodifferential operator $T_a$ by a convolution operator with an integrable kernel.

This elementary observation will be very useful to handle the pseudodifferential operator associated with the error symbol $e^N$ obtained after an application of Theorem \ref{RefinedSymbolicCalculus}. In view of the differential inequalities \eqref{ErrorDiffIneq} satisfied by $e^N$, the identity \eqref{proofKernel} reveals that if $N$ is chosen such that $m-N(1-\delta)+d\delta+\kappa \delta+\epsilon <-d$ then the kernel $K_{e^N}$ associated to the symbol $e^N$ satisfies
\begin{equation}\label{ErrorKernel}
|K_{e^N}(x,y)|\lesssim \frac{R^{-\epsilon}}{(1+|x-y|^2)^{L/2}}
\end{equation}
for any $L\geq 0$. As in \eqref{ErrorDiffIneq}, the implicit constant here is independent of $R$, and only depends on finitely many $C^k$ norms of $\widehat{\varphi}$ and on the implicit constants in the differential inequalities \eqref{symbol} satisfied by $a$. Taking $L>d$, this allows us to bound $T_{e^N}$ by an integrable convolution kernel with a quantitative control of the constant in terms of the scale of the frequency projection $\varphi_R$. As we shall see in Section \ref{sec:ProofMain}, such quantitative control is required for summability purposes in the proof of Theorem \ref{MainThm}.

%
\subsection{Almost orthogonality}

The proof of Theorem \ref{TheoremDyadicPiece} relies on an application of the Cotlar--Stein almost orthogonality principle.

\begin{lemma}[Cotlar--Stein, \cite{bigStein} p. 280]\label{Cotlar--Stein}
Let $\{T_j\}_{j \in \Z^d}$ be a family of operators and $T=\sum_{j \in \Z^d} T_j$. Let $\{c(j)\}_{j \in \Z^d}$ be a family of positive constants such that
$$
A=\sum_{j \in \Z^d} c(j) < \infty
$$
and assume that
$$
\|T_i^*T_j\|_{2 \to 2} \leq c(i-j)^2,
$$
$$
\|T_iT_j^*\|_{2 \to 2} \leq c(i-j)^2.
$$
Then 
$$
\|T\|_{2 \to 2} \leq A.
$$
\end{lemma}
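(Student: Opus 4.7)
The plan is to invoke the classical spectral-radius trick underlying the Cotlar--Stein argument. Since $T^*T$ is self-adjoint and positive, the spectral-radius formula yields $\|T\|_{2\to 2}^{2M} = \|T^*T\|_{2\to 2}^{M} = \|(T^*T)^M\|_{2\to 2}$ for every positive integer $M$. Expanding the composition,
$$
\|(T^*T)^M\|_{2\to 2} \leq \sum_{i_1, j_1, \ldots, i_M, j_M \in \Z^d} \|T_{i_1}^* T_{j_1} T_{i_2}^* T_{j_2} \cdots T_{i_M}^* T_{j_M}\|_{2\to 2},
$$
and I would estimate each summand in two complementary ways and take the geometric mean.

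The first grouping pairs consecutive factors as $(T_{i_k}^* T_{j_k})$, delivering the bound $\prod_{k=1}^{M} c(i_k - j_k)^2$ directly from the hypothesis. The second grouping isolates $T_{i_1}^*$ on the left and $T_{j_M}$ on the right and pairs the interior as $(T_{j_k} T_{i_{k+1}}^*)$; using $\|T_j\|_{2\to 2}^{2} = \|T_j^* T_j\|_{2\to 2} \leq c(0)^2$, this yields $c(0)^2 \prod_{k=1}^{M-1} c(j_k - i_{k+1})^2$. Taking the geometric mean of the two estimates produces the crucial inequality
$$
\|T_{i_1}^* T_{j_1} \cdots T_{i_M}^* T_{j_M}\|_{2\to 2} \leq c(0) \prod_{k=1}^{M} c(i_k - j_k) \prod_{k=1}^{M-1} c(j_k - i_{k+1}).
$$

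To perform the summation it is convenient to first reduce to a finite truncation $F \subset \Z^d$ with $|F| = K$. The $c$-factors then form a chain linking $i_1 \to j_1 \to i_2 \to \cdots \to j_M$, and summing out the $2M - 1$ internal variables one at a time (extending each partial sum to all of $\Z^d$ and invoking $\|c\|_{\ell^1(\Z^d)} = A$) produces a factor of $A^{2M-1}$, while one endpoint variable remains unconstrained and contributes at most $K$. Thus
$$
\|T\|_{2\to 2}^{2M} \leq c(0) \cdot K \cdot A^{2M-1}.
$$
Taking the $2M$-th root and sending $M \to \infty$ extinguishes $(c(0)K)^{1/(2M)} \to 1$, leaving $\|T\|_{2\to 2} \leq A$ with a bound independent of $F$. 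A standard limiting argument then extends the estimate to the full series over $\Z^d$.

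The main obstacle is the ``free variable'' created by the chain structure: the $2M - 1$ difference constraints control only $2M - 1$ of the $2M$ summation indices, so a naive sum over $\Z^d$ would diverge. It is precisely this deficit that forces both the finite truncation and the subsequent passage $M \to \infty$, through which the harmless cardinality factor $K$ is absorbed by the $2M$-th root. This extinction-by-root, together with the two-grouping geometric mean that couples the $T_i^* T_j$ and $T_i T_j^*$ hypotheses into a single chain bound, is the hallmark of the spectral-radius argument behind the Cotlar--Stein principle.
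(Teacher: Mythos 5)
Your proof is correct and follows the same argument as the cited reference (Stein, \emph{Harmonic Analysis}, Ch.~VII); the paper itself offers no proof, simply citing the result. You correctly identify the spectral-radius identity $\|T\|^{2M}=\|(T^*T)^M\|$, the two complementary groupings whose geometric mean yields the chain bound $c(0)\prod_k c(i_k-j_k)\prod_k c(j_k-i_{k+1})$, the telescoping of $2M-1$ constrained sums against one free index, and the extinguishing of the cardinality factor under the $2M$-th root as $M\to\infty$ -- exactly the classical Cotlar--Stein argument.
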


\subsection{$L^2$-boundedness of integral operators}

We also require the following standard version of the Schur test, which is a simple consequence of the Cauchy--Schwarz inequality; see for example Theorem 5.2 in \cite{Halmos}.

\begin{lemma}[Schur's test, \cite{Halmos}]\label{SchurLemma}
Suppose $T$ is given by
$$
Tf(x)=\int_{\R^d}K(x,z)f(z)dz
$$
and assume there exist measurable functions $h_1, h_2 >0$ and positive constants $C_1$ and $C_2$ such that
\begin{equation*}
\int_{\R^d} |K(x,z)|h_1(z)dz \leq C_1 h_2(x) \;\;\;\; \text{ and }  \;\;\;\; \int_{\R^d} |K(x,z)|h_2(x)dx \leq C_2 h_1(z).
\end{equation*}
Then
$$
\|T\|_{2 \to 2} \leq (C_1 C_2)^{1/2}.
$$
\end{lemma}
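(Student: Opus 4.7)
The plan is to obtain the bound by two applications of Cauchy--Schwarz followed by Fubini, using the functions $h_1, h_2$ as weights that split the kernel $|K(x,z)|$ symmetrically. For a generic $f \in L^2(\R^d)$, I would first bound $|Tf(x)|$ pointwise by writing
$$
|Tf(x)| \leq \int_{\R^d} |K(x,z)|^{1/2} h_1(z)^{1/2} \cdot |K(x,z)|^{1/2} h_1(z)^{-1/2} |f(z)| \, dz
$$
and then applying Cauchy--Schwarz in the $z$ variable to obtain
$$
|Tf(x)|^2 \leq \Bigl(\int_{\R^d} |K(x,z)| h_1(z) \, dz \Bigr) \Bigl(\int_{\R^d} |K(x,z)| h_1(z)^{-1} |f(z)|^2 \, dz \Bigr).
$$
By the first hypothesis, the first factor is bounded by $C_1 h_2(x)$.

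Next, I would integrate in $x$, interchange the order of integration via Tonelli (the integrand is non-negative), and apply the second hypothesis to the inner integral $\int |K(x,z)| h_2(x) \, dx \leq C_2 h_1(z)$. This yields
$$
\|Tf\|_2^2 \leq C_1 \int_{\R^d} |f(z)|^2 h_1(z)^{-1} \Bigl(\int_{\R^d} |K(x,z)| h_2(x) \, dx \Bigr) dz \leq C_1 C_2 \|f\|_2^2,
$$
which is the desired inequality after taking a square root.

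There is no real obstacle here: the weights $h_1$ and $h_2$ are strictly positive, so the division by $h_1(z)$ is harmless on the measurable set where $|f|>0$, and the use of Fubini is justified by non-negativity. The only subtlety is to remember to split the kernel as $|K|^{1/2} h_1^{1/2} \cdot |K|^{1/2} h_1^{-1/2}$ with exactly the weight appearing in the first hypothesis, so that the two hypotheses can be applied in succession; any other splitting would force additional bookkeeping.
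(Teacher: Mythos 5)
Your proof is correct and is exactly the standard Cauchy--Schwarz/Fubini argument that the paper has in mind; the paper itself does not reproduce a proof but cites Halmos and explicitly describes the lemma as ``a simple consequence of the Cauchy--Schwarz inequality,'' which is precisely the route you take.
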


\section{Proof of Theorem \ref{MainThm}}\label{sec:ProofMain}

Let $a \in S^m_{\rho,\delta}$ with $m \in \R$, $0 \leq \delta \leq \rho \leq 1$, $\delta <1$. By the embeddings of the symbol classes is enough to prove Theorem \ref{MainThm} for $a \in S^m_{\rho,\rho}$ with $0 \leq\rho<1$, and $a \in S^m_{1,\delta}$ with $\delta < 1$; recall that
$$
S_{\rho_1,\delta_1}^{m_1} \subseteq S_{\rho_2,\delta_2}^{m_2} \:\:\:\: \text{if} \:\:\:\: m_1 \leq m_2, \:\: \rho_1 \geq \rho_2, \:\: \delta_1 \leq \delta_2. 
$$
Observe that the upcoming Theorem \ref{TheoremDyadicPiece} is also valid for the symbol classes $S^m_{1,\delta}$ with $\delta <1$, as they are embedded in $S_{1,1}^m$.

As discussed in Section \ref{sec:Multiplier}, a symbol $a$ satisfying the differential inequalities \eqref{symbol} behaves differently in the regions $\{|\xi| \leq 1\}$ and $\{|\xi| \geq 1\}$. Let $\eta \in C^\infty(\R^d)$ be a smooth function supported in $|\xi| \leq 2$ and let $a_0(x,\xi)=a(x,\xi)\eta(\xi)$ and $a_1$ be such that $a=a_0+a_1$. Theorem \ref{MainThm} will follow from establishing the required weighted inequalities for both $T_{a_0}$ and $T_{a_1}$.

In view of \eqref{symbol}, the symbol $a_0$ satisfies the differential inequalities
$$
|\partial_x^\nu \partial_\xi^\sigma a_0(x,\xi)| \lesssim 1
$$
for all multi-indices $\nu ,\sigma \in \N^d$. This, together with the support condition on the variable $\xi$ that we just imposed in $a_0(x,\xi)$, leads to the following rather elementary weighted inequality.

\begin{proposition}\label{WeightedOrigin}
$$
\int_{\R^d} |T_{a_0}f|^2 w \lesssim \int_{\R^d} |f|^2 A_1^*w.
$$
where $A_1^*w:=\sup_{r \geq 1} A_r w$ and
$$
A_r w (x):= \frac{1}{|B(x,r)|}\int_{B(x,r)}w.
$$
\end{proposition}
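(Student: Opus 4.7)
The plan is to realise $T_{a_0}$ as an integral operator and exploit the rapid kernel decay coming from the compact $\xi$-support and the uniform symbol bounds.

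First, I would write
\[
T_{a_0}f(x)=\int_{\R^d}K(x,y)f(y)\,dy, \qquad K(x,y)=\int_{\R^d}e^{i(x-y)\cdot\xi}a_0(x,\xi)\,d\xi.
\]
Since $a_0(x,\cdot)$ is supported in $|\xi|\le 2$, the trivial bound gives $|K(x,y)|\lesssim 1$. For $|x-y|\geq 1$, I would integrate by parts $N$ times in the direction $\xi_j$ where $|x_j-y_j|$ is largest, using $\nabla_\xi e^{i(x-y)\cdot\xi}=i(x-y)e^{i(x-y)\cdot\xi}$; the uniform bound $|\partial_\xi^\sigma a_0(x,\xi)|\lesssim 1$ (independently of $x$) together with the compact support in $\xi$ yields $|K(x,y)|\lesssim |x-y|^{-N}$ for any $N$. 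Combining the two regimes, for any $L\geq 0$,
\[
|K(x,y)|\lesssim (1+|x-y|)^{-L},
\]
with constants uniform in $x,y$.

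Next, I would apply the Cauchy--Schwarz inequality with respect to the finite measure $|K(x,y)|\,dy$:
\[
|T_{a_0}f(x)|^2\le \Bigl(\int_{\R^d}|K(x,y)|\,dy\Bigr)\Bigl(\int_{\R^d}|K(x,y)||f(y)|^2\,dy\Bigr)\lesssim \int_{\R^d}|K(x,y)||f(y)|^2\,dy,
\]
choosing $L>d$ above so the first factor is uniformly bounded. Integrating against $w$ and applying Fubini,
\[
\int_{\R^d}|T_{a_0}f|^2 w\lesssim \int_{\R^d}|f(y)|^2\Bigl(\int_{\R^d}|K(x,y)|w(x)\,dx\Bigr)dy.
\]

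It then remains to show $\int_{\R^d}|K(x,y)|w(x)\,dx\lesssim A_1^*w(y)$. I would split the $x$-integral into the ball $B(y,1)$ and the dyadic annuli $\{|x-y|\sim 2^k\}\subset B(y,2^{k+1})$ for $k\ge 0$. Using the kernel decay and $|B(y,r)|\sim r^d$, each piece is controlled by $2^{-kL}\cdot 2^{kd}A_{2^{k+1}}w(y)\lesssim 2^{-k(L-d)}A_1^*w(y)$, since $2^{k+1}\ge 1$. Summing the geometric series (valid for $L>d$) yields the required bound, and combining everything gives the proposition. There is no real obstacle here; the only mild care needed is to choose $L$ large enough in both the Cauchy--Schwarz step and the dyadic summation, which is possible because the kernel decay is arbitrarily fast.
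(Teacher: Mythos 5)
Your argument is correct and follows the same overall strategy as the paper: realise $T_{a_0}$ as an integral operator, exploit rapid kernel decay from the compact $\xi$-support, apply Cauchy--Schwarz and Fubini, and control the resulting convolution-type quantity by $A_1^*w$. The one genuine difference is how the kernel decay is extracted. You integrate by parts in $\xi$ to obtain the pointwise bound $|K(x,y)|\lesssim(1+|x-y|)^{-L}$, and then apply Cauchy--Schwarz with respect to the finite measure $|K(x,y)|\,dy$. The paper instead avoids pointwise kernel estimates: it splits into $|z|\le 1$ and $|z|\ge 1$, applies Cauchy--Schwarz with the weight $|z|^{-L}$ on the far piece, and uses Plancherel in $\xi$ to bound $\|K_0(x,\cdot)\|_{L^2}$ and $\|z^\sigma K_0(x,\cdot)\|_{L^2}$ by $\|a_0(x,\cdot)\|_{L^2}$ and $\|D_\xi^\sigma a_0(x,\cdot)\|_{L^2}$, arriving at the same intermediate estimate $|T_{a_0}f(x)|^2\lesssim\int|f(x-z)|^2(1+|z|^2)^{-L}\,dz$. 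Both routes are standard and equivalent here because $a_0(x,\cdot)$ has fixed compact support; the Plancherel route keeps everything $L^2$-based, while your integration-by-parts route is perhaps more direct. You also make explicit the dyadic-annulus argument proving $\Psi^{(N)}\ast w\lesssim A_1^*w$, which the paper leaves as an unproved remark.
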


We provide a proof of this proposition in Section \ref{sec:origin}. The inequality \eqref{WeightedIneqSymbols} for $T_{a_0}$ follows from noting that
$$
A_1^*w \lesssim A_1 A_1^* w \lesssim \mathscr{M}_{\rho,m} A_1^* w \lesssim \mathscr{M}_{\rho,m} Mw \lesssim M^2\mathscr{M}_{\rho,m}M^5w.
$$

The difficulty relies thus on understanding the operator $T_{a_1}$. We will reduce the proof of Theorem \ref{MainThm} to the following theorem, which corresponds to an analogous statement but over the class of functions whose Fourier support lies in a dyadic annulus.

\begin{theorem}\label{TheoremDyadicPiece}
Let $a \in S^{m}_{\rho,\rho}$, where $0\leq \rho \leq 1$. Let $f$ be a function such that $\supp(\widehat{f}) \subseteq \{\xi \in \R^d: |\xi| \sim R\}$, where $R\geq 1$. Then
$$
\int_{\R^d} |T_a f|^2 w \lesssim \int_{\R^d} |f|^2  \mathcal{A}_{\rho,m,R}w
$$
uniformly in $R\geq 1$, where 
$$
\mathcal{A}_{\rho,m,R}w(x):=R^{2m} \int_{\R^d} \Big(\sup_{|y-z|\leq R^{-\rho}}w(z) \Big) \frac{R^{\rho d}}{(1+R^{2\rho}|x-y|^2)^{N_0/2}}dy
$$
and $N_0$ is any natural number satisfying $N_0>d$.
\end{theorem}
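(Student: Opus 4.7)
The plan is to adapt the subdyadic strategy sketched in Section~\ref{sec:Multiplier} to the non-translation-invariant setting. First I decompose the annulus $\{|\xi|\sim R\}$ into $(1-\rho)$-subdyadic balls $B_j=B(\xi_j,cR^\rho)$ indexed by a maximal $R^\rho$-separated family $\{\xi_j\}$, take a smooth partition of unity $\{\chi_j\}$ subordinate to $\{B_j\}$, and define $a_j(x,\xi):=a(x,\xi)\chi_j(\xi)$, $T_j:=T_{a_j}$, so that $T_a=\sum_j T_j$.

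A single-ball estimate comes next. The differential inequalities \eqref{symbol} for $a\in S^m_{\rho,\rho}$ together with the $R^\rho$-scale support of $\chi_j$ imply, after writing $\xi=\xi_j+\eta$ and integrating by parts in $\eta$, that
\[
|K_{a_j}(x,y)|\lesssim \frac{R^{m+\rho d}}{(1+R^\rho|x-y|)^{L}}\qquad(L\ge 0),
\]
uniformly in $j$. Two applications of Cauchy--Schwarz (equivalently Schur's test, Lemma~\ref{SchurLemma}) then produce
\[
\int |T_jf|^2 w \lesssim R^{2m}\int |f|^2 \bigl(P_{R^{-\rho}}*w\bigr) \le \int |f|^2 \mathcal{A}_{\rho,m,R}w,
\]
where $P_{R^{-\rho}}(x):=R^{\rho d}(1+R^\rho|x|)^{-N_0}$; the last inequality follows from the pointwise bound $w(z)\le \sup_{|y-z|\le R^{-\rho}}w(z)$.

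To sum over $j$ without losing a factor proportional to the cardinality of the index set, I invoke Cotlar--Stein (Lemma~\ref{Cotlar--Stein}). The weighted estimate to be proved is equivalent to the $L^2\to L^2$ bound $\|\sum_j S_j\|_{2\to 2}\lesssim 1$, where $S_j$ is the composition of $T_j$ with multiplication by $w^{1/2}$ on the left and $(\mathcal{A}_{\rho,m,R}w)^{-1/2}$ on the right. The diagonal part is already covered by the single-ball estimate, and the off-diagonal hypotheses
\[
\|S_i^* S_j\|_{2\to 2},\;\|S_i S_j^*\|_{2\to 2}\lesssim c(\xi_i-\xi_j)^2
\]
reduce to showing rapid decay, as $|\xi_i-\xi_j|\to\infty$, for the kernels of the pseudodifferential-like objects $T_i^*\cdot w\cdot T_j$ and $T_i\cdot(\mathcal{A}_{\rho,m,R}w)^{-1}\cdot T_j^*$. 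Their amplitudes carry the oscillation $e^{ix\cdot(\xi_j-\xi_i)}$ modulo slower factors; since $\{\xi_j\}$ is $R^\rho$-separated, repeated integration by parts in this phase yields arbitrary polynomial decay in $R^{-\rho}|\xi_i-\xi_j|$, summable over the index set.

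The main obstacle is precisely this off-diagonal analysis. The clean Plancherel orthogonality used in the translation-invariant Fourier multiplier setting of~\cite{BB} is unavailable here: the weight $w$ is not ``transparent'' to the frequency decomposition, and the required decay must be extracted by combining the frequency separation between the subdyadic balls with the physical $R^{-\rho}$-localization of each kernel $K_{a_j}$. The shape of $\mathcal{A}_{\rho,m,R}w$, with an outer Poisson-type convolution at scale $R^{-\rho}$ and an inner $R^{-\rho}$-sup, is dictated precisely by this interaction.
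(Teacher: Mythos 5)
Your overall strategy---subdyadic decomposition of the annulus, a single-ball kernel estimate via integration by parts and Schur's test, and Cotlar--Stein to sum---matches the paper's, and your single-ball bound is correct. However, there are two genuine gaps in the almost-orthogonality step that the paper addresses explicitly and that your sketch glosses over.

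First, you propose to conjugate $T_a$ by $w^{1/2}$ on the output side and $(\mathcal{A}_{\rho,m,R}w)^{-1/2}$ on the input side. With this choice the kernel of $S_i^*S_j$ carries the raw weight $w(y)$ inside the oscillatory $y$-integral, and the ``repeated integration by parts'' you invoke to win decay in $R^{-\rho}|\xi_i-\xi_j|$ is performed precisely in the $y$-variable against the phase $e^{iy\cdot(\eta-\xi)}$: it hits derivatives of $w$, which is only $L^1_{\mathrm{loc}}$ and in general not differentiable. The paper avoids this by first proving the pointwise bound $w\lesssim\mathcal{A}w$ (the lemma borrowed from \cite{Ben2014,BCSV}), thereby reducing the two-weight claim to the symmetric estimate $\int |T_af|^2\,\mathcal{A}w\lesssim\int|f|^2\,\mathcal{A}w$; the conjugation is then by the smooth functions $(\mathcal{A}w)^{\pm1/2}$, which admit the Harnack-type control of Lemma~\ref{PolyGrowth}, and the integration by parts is legitimate. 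Second, you decompose only in the frequency variable, whereas the paper (after rescaling to $S^0_{0,0}$) decomposes in both $x$ and $\xi$ via $\psi(x-i)\psi(\xi-i')$. The spatial cutoff is not cosmetic: it makes the $y$-integral in $K_{\mathbf{i},\mathbf{j}}$ absolutely convergent for $S_{\mathbf{i}}^*S_{\mathbf{j}}$, and for $S_{\mathbf{i}}S_{\mathbf{j}}^*$ the paper has to introduce an additional partition $\sum_k\psi(y-k)$ because the middle factor $(\mathcal{A}w)^{-1}(y)$ grows polynomially in $y$ and the triple integral would otherwise fail to converge absolutely. Your frequency-only decomposition does not address this well-posedness issue at all. (Your omission of the Bessel-potential and rescaling reduction to $S^0_{0,0}$ is a legitimate stylistic choice, not a gap, though it makes the estimates heavier to track.)
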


We postpone to Section \ref{sec:ProofDyadicPiece} the proof of this theorem, which involves the two-stage decomposition briefly described in Section \ref{sec:Multiplier}.

The reduction to Theorem \ref{TheoremDyadicPiece} is done as follows. A first application of Proposition \ref{DyadicDecoupling} to the function $T_{a_1}f$ gives
$$
\int_{\R^d} |T_{a_1} f|^2 w \lesssim \sum_{k \geq 0} \int_{\R^d} |\Delta_k (T_{a_1} f)|^2 M^3w.
$$
Let $\Phi$ be a smooth function such that $\widehat{\Phi}=1$ in $\{\eta \in \R^d : |\eta| \leq 3\}$ and define $\Phi_k$ by $\widehat{\Phi}_k(\eta)=\widehat{\Phi}(2^{-k}\eta)$ for any $k \geq 0$. As $\widehat{\Delta_k g}(\eta)= \widehat{P}(2^{-k}\eta)\widehat{g}(\eta)$ and $\supp(\widehat{P}) \subseteq \{\eta \in \R^d: 3/4 \leq |\eta| \leq 3\}$, we have $\Delta_k (T_{a_1} f)= \Delta_k (T_{a_1} f) \ast \Phi_k$. An application of the Cauchy--Schwarz inequality and Fubini's theorem gives
\begin{equation}\label{smoothWeight}
\int_{\R^d} |\Delta_k (T_{a_1} f)|^2 M^3 w = \int_{\R^d} |\Delta_k (T_{a_1} f) \ast \Phi_k|^2 M^3 w \lesssim \int_{\R^d} |\Delta_k (T_{a_1} f) |^2 |\Phi_k| \ast M^3w,
\end{equation}
uniformly in $k \geq 0$, as the functions $\Phi_k$ are normalised on $L^1(\R^d)$.

At this stage, one would like to interchange $\Delta_k$ and $T_{a_1}$ in order to apply Theorem \ref{TheoremDyadicPiece}. As discussed in Section \ref{subsec:symbcalc}, this may be done provided we introduce terms of lower order.  As $\delta <1$, fixing $\epsilon>0$ and $\kappa>0$, an application of Theorem \ref{RefinedSymbolicCalculus} for any $k \geq 0$ gives 
$$
\Delta_k(T_{a_1} f) = T_{a_1}(\Delta_k f) + \sum_{1 \leq |\gamma|<N} \frac{i^{-|\gamma|}}{\gamma!} T^\gamma_k f + T_{e_k} f,
$$
where 
$$
T^\gamma_k f(x):= \int_{\R^d} e^{i x \cdot \xi} \partial_\xi^\gamma \widehat{P}_k (\xi) \partial_x^\gamma a_1 (x,\xi) \widehat{f}(\xi),
$$
and $e_k$ is a symbol satisfying
$$
|\partial_x^\nu \partial_\xi^\sigma e_k(x,\xi)|\lesssim 2^{-k \epsilon} (1+|\xi|)^{m-N(1-\delta)+d\delta+\kappa\delta+\epsilon -|\sigma|\rho + |\nu|\delta}
$$
for any multi-indices $\nu, \sigma \in \N^d$. Here $\gamma \in \N^d$, and we choose $N$ to be a positive integer satisfying 
$$
m-N(1-\delta)+d\delta+\kappa\delta+\epsilon <-d;
$$
for ease of notation we removed the dependence of $N$ in the error term $e_k$, as $N$ is a chosen fixed number independent of $k$. Such a choice of $N$ allows one to argue as in Section \ref{subsec:kernel}, and the inequality \eqref{ErrorKernel} reads here as
$$
|K_{e_k}(x,y)|\lesssim \frac{2^{-k \epsilon}}{(1+|x-y|^2)^{L/2}}
$$
for any $L \geq 0$. Taking $L>d$, and setting $\Psi^{(L)}(x):=(1+|x|^2)^{-L/2}$, an application of the Cauchy--Schwarz inequality and Fubini's theorem gives
$$
\int_{\R^d}|T_{e_k} f|^2 |\Phi_k| \ast M^3w \lesssim 2^{-2k\epsilon} \int_{\R^d} |f|^2 \Psi^{(L)} \ast |\Phi_k| \ast M^3w  \lesssim 2^{-2k\epsilon} \int_{\R^d} |f|^2 M^2 \mathscr{M}_{\rho,m} M^5 w, 
$$
with implicit constant independent of $k \geq 0$; the last inequality follows from the observation that
$$
\Psi^{(L)} \ast |\Phi_k| \ast M^3 w \lesssim A_1^* M^4 w \lesssim A_1 A_1^* M^4 w \lesssim \mathscr{M}_{\rho,m}  A_1^* M^4 w \lesssim M^2 \mathscr{M}_{\rho,m} M^5 w.
$$
This is an acceptable bound for each $T_{e_k}$, as summing over all $k \geq 0$ we obtain
$$
\sum_{k \geq 0} \int_{\R^d} |T_{e_k}f|^2 |\Phi_k| \ast M^3 w \lesssim \sum_{k \geq 0} 2^{-2k\epsilon} \int_{\R^d} |f|^2 M^2 \mathscr{M}_{\rho,m}M^5 w \lesssim \int_{\R^d} |f|^2 M^2 \mathscr{M}_{\rho,m}M^5 w
$$
for any $\varepsilon>0$.

For the term corresponding to $T_{a_1}(\Delta_k f)$, we invoke Theorem \ref{TheoremDyadicPiece},
$$
\int_{\R^d} |T_{a_1}(\Delta_k f)|^2 |\Phi_k| \ast M^3w \lesssim \int_{\R^d} |\Delta_k f|^2  \CA_{\rho,m,2^k}(|\Phi_k| \ast M^3w) \lesssim \int_{\R^d} |\Delta_ k f|^2 M \mathscr{M}_{\rho,m} M^4 w,
$$
where the last inequality follows by taking the supremum over all $k \geq 0$ in the weight function. Now, one may recouple the dyadic frequency pieces using the standard weighted Littlewood--Paley theory from Proposition \ref{DyadicRecoupling},
$$
\sum_{k \geq 0} \int_{\R^d} |\Delta_ k f|^2 M \mathscr{M}_{\rho,m} M^4 w \lesssim \int_{\R^d} |f|^2 M^2 \mathscr{M}_{\rho,m} M^5 w.
$$

Finally, we need to study the terms $T^\gamma_k$ for $1 \leq |\gamma| < N$. Observe that $\partial_\xi^\gamma \widehat{P}_k$ is supported on $\{\xi \in \R^d: 3/4 \cdot 2^k \leq |\xi| \leq 3 \cdot 2^k\}$ for any $\gamma \in \N^d$, so we are still able to use Theorem \ref{TheoremDyadicPiece} here. To this end, let $\theta$ be a smooth function such that $\widehat{\theta}(\xi) = 1 $ on $\{\xi \in \R^d: 3/4 \leq |\xi| \leq 3\}$ and that vanishes outside $\{\xi \in \R^d: 1/2 \leq |\xi| \leq 4\}$. Let $\Theta_k$ be the operator defined by $\widehat{\Theta_k g}(\xi)=\widehat{\theta}_k(\xi)\widehat{g}(\xi)$, where $\widehat{\theta}_k(\xi)= \widehat{\theta}(2^{-k}\xi)$. Then $T^\gamma_k f=T^\gamma_k(\Theta_k f)$ and observing that the symbol $\partial_\xi^\gamma \widehat{P}_k (\xi) \partial_x^\gamma a_1 (x,\xi) \in S^{m}_{\rho,\delta}$ uniformly in $k \geq 0$ (by embedding of symbol classes), Theorem \ref{TheoremDyadicPiece} leads to
$$
\int_{\R^d} |T^\gamma_k f|^2 |\Phi_k| \ast M^3w = \int_{\R^d} |T^\gamma_k (\Theta_k f)|^2 |\Phi_k| \ast M^3w   \lesssim \int_{\R^d} |\Theta_k f|^2  \CA_{\rho,m,2^k} (|\Phi_k|\ast M^3 w),
$$
uniformly in $k \geq 0$, for every $\gamma$ such that $1\leq |\gamma| < N$. The sum in $\gamma$ is not a problem as there is a finite number of terms in that sum, so
$$
\sum_{k \geq 0} \sum_{1\leq |\gamma| \leq N} \frac{1}{\gamma!} \int_{\R^d} |T^\gamma_k f|^2 |\Phi_k| \ast M^3w \lesssim \sum_{k \geq 0} \int_{\R^d} |\Theta_k f|^2  \CA_{\rho,m,2^k} (|\Phi_k|\ast M^3 w).
$$
For the sum in $k$ we use again standard weighted Littlewood--Paley theory (Proposition \ref{DyadicRecoupling}) to conclude that
$$
\sum_{k \geq 0} \int_{\R^d} |\Theta_k f|^2  \CA_{\rho,m,2^k} (|\Phi_k|\ast M^3 w) \leq \sum_{k \geq 0} \int_{\R^d} |\Theta_k f|^2 M \mathscr{M}_{\rho,m} M^5 w \lesssim \int_{\R^d} |f|^2 M^2 \mathscr{M}_{\rho,m} M^5 w,
$$
where the first inequality follows from taking the supremum in $k\geq 0$ in the weight function. Putting the pieces together, we have shown that
$$
\int_{\R^d} |T_{a_1} f|^2 w \lesssim \int_{\R^d} |f|^2 M^2 \mathscr{M}_{\rho,m} M^5 w,
$$
and therefore the proof of Theorem \ref{MainThm} is completed provided we verify the statements of Proposition \ref{WeightedOrigin} and Theorem \ref{TheoremDyadicPiece}.

\section{The part $|\xi|\leq 1$: proof of Proposition \ref{WeightedOrigin}}\label{sec:origin}

It is crucial to realise that as $a_0(x,\xi)$ has compact support in the $\xi$ variable, we may write $T_{a_0}$ as
$$
T_{a_0}f(x)=\int_{\R^d} \int_{\R^d} e^{i (x-y)\cdot \xi} a_0(x,\xi) f(y)dyd\xi,
$$
as the double integral is absolutely convergent. Denoting by $K_0$ the kernel of $T_{a_0}$,
$$
K_0(x,z)=\int_{\R^d} e^{i z \cdot \xi} a_0(x,\xi) d\xi,
$$
we may write
$$
T_{a_0} f(x)= \int_{\R^d} K_0(x,x-y) f(y)dy.
$$
We may interpret $T_{a_0}$ as the convolution of the function $K(x, \cdot)$ with $f$ evaluated at the point $x$ and
\begin{align*}
\int_{\R^d} |T_{a_0}f(x)|^2 w(x)dx & \leq \int_{\R^d} \Big(\int_{\R^d} |K_0(x,z)||f(x-z)|dz \Big)^2 w(x)dx.
\end{align*}
We split the range of integration for the inner integral in two parts, $|z|\leq 1$ and $|z|\geq 1$. For the first term, the Cauchy-Schwarz inequality, Plancherel's theorem and the estimates on $a_0$ give
\begin{align*}
\Big(\int_{|z|\leq 1} |K_0(x,z)||f(x-z)|dz \Big)^2 &  \leq  \Big(\int_{\R^d} |K_0(x,z)|^2 dz \Big) \Big( \int_{|z|\leq 1}|f(x-z)|^2dz \Big) \\
& \lesssim  \Big(\int_{|\xi|\leq 2} |a_0(x,\xi)|^2 d\xi \Big) \Big( \int_{|z|\leq 1}|f(x-z)|^2dz \Big) \\
& \lesssim \int_{\R^d} |f(x-z)|^2 \frac{1}{(1+|z|^2)^{L}}dz.
\end{align*}
Similarly, for the second term,
\begin{align*}
\Big(\int_{|z|\geq 1} |K_0(x,z)||f(x-z)|dz \Big)^2 &  \leq  \Big(\int_{\R^d} |K_0(x,z)|^2 |z|^{2\sigma} dz \Big) \Big( \int_{|z|\geq 1}\frac{1}{|z|^{2L}}|f(x-z)|^2dz \Big) \\
& \leq  \Big(\int_{\R^d} \sum_{|\sigma|=L}|z^\sigma K_0(x,z)|^2  dz \Big) \Big( \int_{|z|\geq 1}\frac{1}{|z|^{2L}}|f(x-z)|^2dz \Big) \\
& \lesssim  \Big(\int_{|\xi|\leq 2} \sum_{|\sigma|=L} |D^\sigma_\xi a_0(x,\xi)|^2 d\xi \Big) \Big( \int_{|z|\geq 1} \frac{1}{(1+|z|^2)^{L}}|f(x-z)|^2dz \Big) \\
& \lesssim \int_{\R^d} |f(x-z)|^2 \frac{1}{(1+|z|^2)^{L}}dz,
\end{align*}
where $\sigma \in \N^d$ is a multi-index of order $L$. Putting things together and setting $\Psi^{(2L)}(y)=(1+|y|^2)^{-L}$, Fubini's theorem gives
\begin{align*}
\int_{\R^d} |T_{a_0}f(x)|^2w(x)dx \lesssim \int_{\R^d} \int_{\R^d} |f(x-z)|^2 \frac{1}{(1+|z|^2)^{L}} dz w(x)dx = \int_{\R^d} |f(z)|^2 \Psi^{(2L)} \ast w (z).
\end{align*}
Proposition \ref{WeightedOrigin} follows from noting that $\Psi^{(2L)} \ast w \lesssim A_1^* w$ for $L>d/2$.

\section{The dyadic pieces in $|\xi| \geq 1$: Proof of Theorem \ref{TheoremDyadicPiece}}\label{sec:ProofDyadicPiece}

By analogy with the proof provided in \cite{bigStein} for the $L^2$-boundedness of the symbol classes $S^0_{\rho,\rho}$, with $0\leq \rho <1$, we reduce Theorem \ref{TheoremDyadicPiece} to a similar statement for the symbol classes $S_{0,0}^0$. As we shall see, this is achieved using Bessel potentials and an elementary scaling argument. For the proof of the weighted inequality for the class $S^0_{0,0}$ we perform an equally spaced decomposition and make an application of the Cotlar--Stein almost orthogonality principle.

%
%

\subsection{Reduction to the symbol classes $S_{\rho,\rho}^0$}

It is enough to prove the following version of Theorem \ref{TheoremDyadicPiece} for the symbol classes $S^0_{\rho,\rho}$.

\begin{proposition}\label{beta0}
Let $a \in S^0_{\rho,\rho}$, where $0 \leq \rho \leq 1$. Let $f$ be a function such that $\supp(\widehat{f}) \subseteq \{\xi \in \R^d : |\xi| \sim R\}$ with $R\geq 1$. Then
\begin{equation*}
\int_{\R^d} |T_a f|^2 w \lesssim \int_{\R^d} |f|^2 \mathcal{A}_{\rho,0,R}w
\end{equation*}
uniformly in $R \geq 1$.
\end{proposition}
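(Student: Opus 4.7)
The plan is to reduce the claim to an analogous weighted inequality for symbols in $S^0_{0,0}$ via an elementary change of variables, and then to prove the latter by combining an equally spaced decomposition of the symbol with the Cotlar--Stein almost orthogonality principle (Lemma \ref{Cotlar--Stein}).

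For the scaling step, I substitute $x = R^{-\rho} y$, $\xi = R^{\rho} \eta$ and set
$$b(y, \eta) := a(R^{-\rho} y, R^{\rho} \eta)\, \widehat{\chi}(\eta), \qquad g(y) := f(R^{-\rho} y),$$
where $\widehat{\chi}$ is a smooth cut-off equal to $1$ on the rescaled Fourier support $\{|\eta| \sim R^{1-\rho}\}$ of $g$ and supported in a slight enlargement. Differentiating through the scaling, and using that the cut-off $\widehat{\chi}$ itself lies in $S^0_{0,0}$ uniformly in $R$, one checks $|\partial_y^\nu \partial_\eta^\sigma b(y,\eta)| \lesssim 1$ for all multi-indices $\nu,\sigma$, so $b \in S^0_{0,0}$ uniformly in $R$. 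A direct calculation gives $T_b g(y) = T_a f(R^{-\rho} y)$; changing variables on both sides of the target inequality, together with an explicit transformation of $\mathcal{A}_{\rho,0,R}w$, reduces matters to proving
$$\int_{\R^d} |T_b g|^2 \, W \lesssim \int_{\R^d} |g|^2 \, \widetilde W,$$
where $W(y) := w(R^{-\rho} y)$ and $\widetilde W(y)$ is the convolution of the unit-ball supremum $\sup_{|z - \cdot| \leq 1} W(z)$ with the unit-scale Poisson-like kernel $(1 + |\cdot|^2)^{-N_0/2}$.

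To establish this $S^0_{0,0}$ estimate I decompose $b = \sum_{j,k \in \Z^d} b_{j,k}$ via a smooth partition of unity supported on translates of unit cubes in $(y, \eta)$, so that $b_{j,k}$ is localized where $y$ lies in a unit cube about $j$ and $\eta$ in one about $k$. Integration by parts in $\eta$ in the kernel representation of $T_{b_{j,k}}$ yields $|K_{j,k}(y,z)| \lesssim_N \psi_j(y) (1 + |y - z|)^{-N}$ for every $N$, with constants independent of $j, k, R$. To absorb the weight, I consider the operator $Sg := W^{1/2}\, T_b(\widetilde W^{-1/2} g)$ on unweighted $L^2$, so that $\|S\|_{2 \to 2}^2$ is the best constant in the desired weighted inequality, and I decompose $S = \sum_{j,k} S_{j,k}$ correspondingly. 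The Cotlar--Stein principle reduces $\|S\|_{2 \to 2} \lesssim 1$ to the almost orthogonality estimates $\|S_{i,k}^{*} S_{j,l}\|_{2 \to 2},\, \|S_{i,k} S_{j,l}^{*}\|_{2 \to 2} \leq c((i,k) - (j,l))^2$ with $\sum c < \infty$. These I verify via Schur's test (Lemma \ref{SchurLemma}) applied to the composed kernels: spatial decay in $|i - j|$ comes from the cut-offs $\psi_i, \psi_j$, decay in $|k - l|$ from further integration by parts in the common integration variable, and the smoothing defining $\widetilde W$ is tailored to absorb the pointwise fluctuations of $W$ that appear inside the Schur integrals, producing $R$-independent constants. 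Undoing the scaling then delivers Proposition \ref{beta0}.

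The principal difficulty lies in this final calibration: the smoothing in $\widetilde W$ (equivalently, the precise form of $\mathcal{A}_{\rho,0,R}w$) must be both tight enough to match the statement and strong enough to tame the fluctuations of $W$ against the oscillations in the off-diagonal Schur estimates, while all implicit Cotlar--Stein and Schur constants must remain uniform in $R$.
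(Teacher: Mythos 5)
Your road map coincides with the paper's: the scaling $x \mapsto R^{-\rho}x$, $\xi \mapsto R^\rho \xi$ reduces the claim to a weighted $L^2$ inequality for the class $S^0_{0,0}$, and that inequality is proved by decomposing the symbol over unit cubes in $(x,\xi)$, conjugating by weights, and applying Cotlar--Stein with Schur's test. The one visible departure is your asymmetric conjugation $S = W^{1/2} T_b \widetilde{W}^{-1/2}$: the paper first proves $w \lesssim \mathcal{A}w$ (a short pigeonholing argument over the $2^d$ coordinate hyperoctants of the unit ball) and then conjugates \emph{symmetrically} by $(\mathcal{A}w)^{\pm 1/2}$. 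This is more than cosmetic -- the symmetry makes both Schur conditions identical, and the $S_\bi S_\bj^*$ branch is delicate with your version since it leaves the rough weight $W$ on the outside of the conjugated kernel; you can recover the symmetric setup by first dominating $W \leq \widetilde{W}$, but that domination must then be made explicit.

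The more substantive issue is that the ``principal difficulty'' you flag at the end is exactly where the technical content lies, and your sketch does not supply its resolution. The paper's mechanism is a Harnack-type inequality (Lemma \ref{HarnackGoodDecay}): since $\mathcal{A}w = \Psi^{(N_0)} \ast \widetilde{w}$ is a convolution with a polynomially tailed kernel, $\mathcal{A}w(x) \gtrsim (1+|x-y|^2)^{-N_0/2}\mathcal{A}w(y)$. This single estimate both gives $\mathcal{A}w \ast \Psi^{(N_2)} \lesssim \mathcal{A}w$ for the $S_\bi^* S_\bj$ branch and the \emph{reverse} estimate $(\mathcal{A}w)^{-1} \ast \Psi^{(N_2)} \lesssim (\mathcal{A}w)^{-1}$ (the paper's \eqref{ConditionSchur}), which is what closes the $S_\bi S_\bj^*$ branch; the latter is the genuinely delicate case and also requires a further partition of unity in the intermediate integration variable $y$ just to make the composite kernel absolutely convergent, a point your outline omits. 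Finally, to justify the conjugation at all one needs $(\mathcal{A}w)^{\pm 1/2}$ and its derivatives to have polynomial growth so that $S$ maps $\mathcal{S}$ to $\mathcal{S}$ (the paper's Lemma \ref{PolyGrowth}); again this comes from the specific smoothing built into $\mathcal{A}w$. So you have correctly identified the crux, but the Harnack-type estimate and the well-definedness of the $S_\bi S_\bj^*$ kernel are the missing ideas that need to be filled in.
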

Theorem \ref{TheoremDyadicPiece} follows from the above proposition via the following observation. Let $J_m$ denote the Bessel potential of order $m$, that is $\widehat{J_m f}(\xi)=(1+|\xi|^2)^{m/2}\widehat{f}(\xi)$. Then
$$
T_af(x)=\int_{\mathbb{R}^d}e^{ix\cdot\xi}a(x,\xi)(1+|\xi|^2)^{m/2}(1+|\xi|^2)^{-m/2}\widehat{f}(\xi)d\xi=T_{\widetilde{a}}(J_m f)(x),
$$
where $\widetilde{a}(x,\xi)=a(x,\xi)(1+|\xi|^2)^{-m/2} \in S^0_{\rho,\rho}$.  By Proposition \ref{beta0}
\begin{align*}
\int_{\R^d} |T_a f|^2 w  \lesssim \int_{\R^d} |J_m f|^2 \mathcal{A}_{\rho,0,R}w \lesssim \int_{\R^d} |f|^2 R^{2m} \Psi_R^{(L)} \ast \mathcal{A}_{\rho,0,R}w \lesssim \int_{\R^d} |f|^2  \CA_{\rho,m,R}w,
\end{align*}
where $\Psi_R^{(L)}(x):= \frac{R^d}{(1+R^2|x|^2)^{L/2}}$ with $L>d$. The penultimate inequality here follows from the elementary inequality
$$
\int_{\R^d} |J_m f|^2 w \lesssim \int_{\R^d} |f|^2 R^{2m} \Psi_R^{(L)} \ast w,
$$
which holds for any $L>d$, any weight $w$, and functions $f$ such that $\supp(\widehat{f}) \subseteq \{\xi \in \R^d: |\xi| \sim R\}$ with $R\geq 1$. The last inequality follows from noting that $\Psi_R^{(L)} \ast \Psi_{R^\rho}^{(N_0)} \lesssim \Psi_{R^\rho}^{(N_0)}$ choosing $L=N_0$; see Lemma \ref{LemmaConv} in Appendix \ref{app:Phi}.

\subsection{Reduction to the symbol classes $S_{0,0}^0$}

The goal now is to prove Proposition \ref{beta0}, that is, the special case of Theorem \ref{TheoremDyadicPiece} for the symbol classes $S^0_{\rho,\rho}$. We shall see that, thanks to an elementary scaling argument, this reduces itself to the following specific case for the symbol class $S_{0,0}^0$.

\begin{proposition}\label{S000}
Let $a \in S_{0,0}^0$. Then
\begin{equation*}
\int_{\R^d} |T_a f|^2 w \lesssim \int_{\R^d} |f|^2 \mathcal{A}w,
\end{equation*}
where $\mathcal{A}w:=\Psi^{(N_0)} \ast \widetilde{w}$, $\widetilde{w}(x):=\sup_{|y-x| \leq 1} w(y)$ and $\Psi^{(N_0)}(x)=\frac{1}{(1+|x|^2)^{N_0/2}}$ with $N_0>d$.
\end{proposition}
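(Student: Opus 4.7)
The plan is to recast the weighted inequality as an unweighted $L^2 \to L^2$ operator bound via conjugation, and then to apply the Cotlar--Stein principle to a frequency-localized decomposition of $T_a$. By routine approximation one may assume $w$ is strictly positive and bounded, in which case the desired inequality is equivalent to $\|S\|_{L^2 \to L^2} \lesssim 1$ for the operator $S := M_{w^{1/2}} T_a M_{(\CA w)^{-1/2}}$, where $M_\phi$ denotes multiplication by $\phi$. Let $\{\chi_j\}_{j \in \Z^d}$ be a smooth partition of unity on $\R^d$ with $\supp \chi_j \subset B(j,1)$, and set $a_j(x,\xi) := a(x,\xi)\chi_j(\xi)$, so that $S = \sum_{j \in \Z^d} S_j$ with $S_j := M_{w^{1/2}} T_{a_j} M_{(\CA w)^{-1/2}}$. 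The kernel of $T_{a_j}$ admits the factorization $K_j(x,y) = e^{i(x-y)\cdot j} \widetilde K_j(x,y)$ with $\widetilde K_j(x,y) := \int e^{i(x-y)\cdot\eta} a_j(x,j+\eta)\, d\eta$ over the unit ball in $\eta$; integration by parts in $\eta$ together with the $S^0_{0,0}$ bounds yields $|\partial_x^\nu \partial_y^\mu \widetilde K_j(x,y)| \lesssim_L (1+|x-y|)^{-L}$ for any $L \geq 0$, uniformly in $j$.

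The main input to Cotlar--Stein will be the estimate $\|S_i S_k^*\|_{L^2 \to L^2} + \|S_i^* S_k\|_{L^2 \to L^2} \lesssim (1+|i-k|)^{-M}$ for every $M > 0$. For $S_i S_k^*$ the kernel is
$$\widetilde{\mathcal K}(x,u) = w^{1/2}(x)\, w^{1/2}(u) \int K_i(x,z)\, (\CA w)^{-1}(z)\, \overline{K_k(u,z)}\, dz.$$
The crucial pointwise bound $\CA w(z) \gtrsim \Psi^{(N_0)}(z-x)\, w(x)$, immediate from restricting the defining integral of $\CA w$ to $|t-x| \leq 1$ (where $\widetilde w(t) \geq w(x)$), permits the splitting
$$\frac{w^{1/2}(x)\, w^{1/2}(u)}{\CA w(z)} \lesssim \Psi^{(N_0)}(z-x)^{-1/2}\, \Psi^{(N_0)}(z-u)^{-1/2}.$$
The remaining $z$-integral carries the oscillation $e^{iz\cdot(k-i)}$ coming from the phase factorization of $K_i$ and $K_k$; iterated integration by parts in $z$ against this oscillation, exploiting the rapid $z$-decay of $\widetilde K_i(x,z)$ and $\widetilde K_k(u,z)$ to overwhelm the polynomial growth of the $\Psi^{(N_0)}(\cdot)^{-1/2}$ factors and their derivatives, extracts an arbitrary power $(1+|i-k|)^{-M}$. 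An application of Schur's test (Lemma~\ref{SchurLemma}) with test functions $h_1 = h_2 \equiv 1$ then converts the resulting integrable kernel bound into the desired operator-norm estimate. The analysis of $S_i^* S_k$ is analogous: the factor $w(z)$ now arises inside the integral and is absorbed against the outer $(\CA w)^{-1/2}(x)$ and $(\CA w)^{-1/2}(u)$ via $\CA w(x) \gtrsim \Psi^{(N_0)}(x-z)\, w(z)$.

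Taking $M > 2d$ and setting $c(j) := C(1+|j|)^{-M/2}$, the Cotlar--Stein principle (Lemma~\ref{Cotlar--Stein}) yields $\|S\|_{L^2 \to L^2} \lesssim \sum_j c(j) \lesssim 1$, which is precisely the desired weighted inequality. The main technical obstacle is the integration-by-parts step: one must verify that, after the weights have been absorbed into the factors $\Psi^{(N_0)}(\cdot)^{-1/2}$, every order of differentiation in $z$ produces only polynomially controlled growth, so that each iteration of IBP genuinely gains a factor $|i-k|^{-1}$ rather than being offset by derivatives falling on $\Psi^{(N_0)}(\cdot)^{-1/2}$. Careful bookkeeping of the bounds on $\partial_z^\alpha \widetilde K_j$ against $\partial_z^\alpha \Psi^{(N_0)}(\cdot)^{-1/2}$ closes the estimate.
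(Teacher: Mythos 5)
Your proposal shares the paper's overall strategy (conjugate by weight powers, decompose the symbol, apply Cotlar--Stein with Schur's test), but there is a genuine gap in the treatment of $S_i^* S_k$, and a secondary gap in the derivative control needed for the integration by parts.

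The central problem is your choice of conjugation. You set $S := M_{w^{1/2}} T_a M_{(\CA w)^{-1/2}}$ directly, instead of first upgrading the weight on the left-hand side. Compute the two Cotlar--Stein products:
\begin{equation*}
S_i S_k^* = M_{w^{1/2}}\, T_{a_i}\, M_{(\CA w)^{-1}}\, T_{a_k}^*\, M_{w^{1/2}},
\qquad
S_i^* S_k = M_{(\CA w)^{-1/2}}\, T_{a_i}^*\, M_{w}\, T_{a_k}\, M_{(\CA w)^{-1/2}}.
\end{equation*}
For $S_i S_k^*$ the weight factor sitting in the middle of the $z$-integral is $(\CA w)^{-1}(z)$, which is smooth, and the IBP against $e^{iz\cdot(i-k)}$ can in principle be carried out (modulo the point below). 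But for $S_i^* S_k$ the middle factor is the \emph{raw} weight $w(z)$, which is merely locally integrable, and against which no integration by parts is possible: differentiating $w$ is not an option, and your suggested alternative --- absorbing $w(z)$ pointwise into $(\CA w)^{-1/2}(x)(\CA w)^{-1/2}(u)$ before exploiting the oscillation --- requires taking absolute values of the integrand, which destroys the phase $e^{iz\cdot(i-k)}$ and leaves no decay in $|i-k|$ at all. Neither is there any exact frequency orthogonality to fall back on, since multiplication by a rough $w$ smears frequency support arbitrarily. Thus the bound $\|S_i^* S_k\|_{2\to 2}\lesssim(1+|i-k|)^{-M}$ is not established, and Cotlar--Stein cannot be closed. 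The paper avoids precisely this trap by first observing the elementary pointwise bound $w \lesssim \CA w$ and then proving the \emph{stronger} inequality $\int |T_a f|^2\,\CA w \lesssim \int |f|^2\,\CA w$, so that the conjugation uses $M_{(\CA w)^{1/2}}$ on both sides and both products $S_i^* S_j$, $S_i S_j^*$ have smooth middle factors $\CA w$, $(\CA w)^{-1}$.

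A second, related gap: even on the $S_i S_k^*$ side, the IBP throws $z$-derivatives on $(\CA w)^{-1}(z)$, and you need to know that every such derivative is still bounded by $(\CA w)^{-1}(z)$ itself. Your last paragraph gestures at ``derivatives of $\Psi^{(N_0)}(\cdot)^{-1/2}$,'' but those are derivatives of an upper \emph{bound}, not of the actual function being differentiated. The statement you actually need is $|D^\gamma (\CA w)^{\pm 1}|\lesssim (\CA w)^{\pm 1}$ (this is Lemma~\ref{PolyGrowth} in the paper, a consequence of $|D^\gamma\Psi^{(N_0)}|\lesssim\Psi^{(N_0)}$ and the positivity of $\widetilde w$), and without it the bookkeeping in the IBP does not close. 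Finally, I note that your decomposition of the symbol only in the $\xi$-variable (indices in $\Z^d$) is a genuine structural departure from the paper's decomposition in both $x$ and $\xi$ (indices in $\Z^{2d}$); while a $\xi$-only decomposition may be viable once the weight issues above are fixed, the paper's two-variable decomposition is what makes the kernels absolutely convergent and the Schur estimates symmetric in $x$ and $z$, so this choice would need additional justification.
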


To deduce Proposition \ref{beta0} from this, let $\varphi$ be a smooth function such that $\widehat{\varphi}$ equals 1 in $\{ \xi \in \R^d: |\xi| \sim 1\}$ and has compact Fourier support in a slightly enlargement of it, and let $\varphi_R$ be defined by $\widehat{\varphi}_R(\xi):=\widehat{\varphi}(R^{-1}\xi)$. The Fourier support properties of $f$ allows us to write the reproducing formula $\widehat{f}=\widehat{f} \widehat{\varphi}_R$. We may then replace the symbol $a(x,\xi)$ by $a(x,\xi)\widehat{\varphi}_R(\xi)$, which belongs to the class $S^0_{\rho,\rho}$ uniformly in $R$. For ease of notation, we shall denote the product symbol $a(x,\xi)\widehat{\varphi}_R(\xi)$ by $a(x,\xi)$, but assuming that $a(x,\xi)$ is supported in $\{|\xi| \sim R\}$. Let
$$
\widetilde{a}(x,\xi):=a(R^{-\rho}x,R^{\rho} \xi).
$$
It is easy to verify from the differential inequalities \eqref{symbol}, and the support property of $a(x,\xi)$, that the new symbol $\widetilde{a}$ belongs to the class $S_{0,0}^0$ uniformly in $R$; note that $\widetilde{a}$ is $\xi$-supported in an annulus of width $O(R^{1-\rho})$. 
%

The change of variables $x \mapsto R^{-\rho}x$, $\xi \mapsto R^{\rho}\xi$ together with Proposition \ref{S000} lead to
$$
\int_{\R^d}|T_af|^2 w = \int_{\R^d} |T_{\widetilde{a}}f_R|^2 w_R \lesssim \int_{\R^d} |f_R|^2 \mathcal{A}w_R
$$
for functions $f$ such that $\supp(\widehat{f}) \subseteq \{|\xi| \sim R\}$, where 
$$
w_R(x):=w(R^{-\rho}x)R^{-\rho d}
$$
and
$$
\widehat{f}_R(\xi):=\widehat{f}(R^{\rho}\xi)R^{\rho d}.
$$
Proposition \ref{beta0} now follows from noting that
$$
\mathcal{A}w_R(R^{\rho}x)R^{\rho d}= \mathcal{A}_{\rho,0,R}w(x),
$$
which is a consequence of the definitions of $\CA$ and $\CA_{\rho,0, R}$, along with an elementary scaling argument.

We briefly compare Proposition \ref{S000} with Theorem \ref{MainThm} for the specific case of the symbol class $S_{0,0}^0$. In contrast with $\mathscr{M}_{0,0}$, the operator $\CA$ fails to be bounded in any Lebesgue space, making Proposition \ref{S000} not as interesting on its own. Our method to turn it into a bounded maximal operator consists of smoothing out the weight $w$ by a suitable average, in the spirit of \eqref{smoothWeight}. As explained in Section \ref{sec:ProofMain}, this may be done after a first dyadic frequency decomposition of the operator $T_af$. 

\subsection{The symbol class $S_{0,0}^0$: proof of Proposition \ref{S000}}

In this section we assume that $a \in S^0_{0,0}$. We first observe that the weight $w$ is pointwise controlled by $\CA w$. This is contained in the following lemma, which we borrow from \cite{Ben2014}; see \cite{BCSV} for the origins of this. Its short proof is included for completeness.
\begin{lemma}[\cite{Ben2014,BCSV}]
$w \lesssim \mathcal{A}w.$
\end{lemma}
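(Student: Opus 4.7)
The plan is to unwind the two definitions and exploit positivity. Concretely, I would first observe that the mapping $y \mapsto \widetilde{w}(y)$ satisfies the pointwise lower bound $\widetilde{w}(y) \geq w(x)$ whenever $|y - x| \leq 1$. This is immediate from the definition of $\widetilde{w}$: since $|x - y| \leq 1$, the value $w(x)$ is among those over which the supremum defining $\widetilde{w}(y)$ is taken.

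Next, I would localise the convolution $\Psi^{(N_0)} \ast \widetilde{w}$ to the ball $B(x,1)$. On this ball, the kernel $\Psi^{(N_0)}(x-y) = (1+|x-y|^2)^{-N_0/2}$ is bounded below by the absolute constant $2^{-N_0/2}$. Combining this with the previous lower bound,
\begin{equation*}
\mathcal{A}w(x) = \int_{\R^d} \Psi^{(N_0)}(x-y)\,\widetilde{w}(y)\,dy \geq 2^{-N_0/2} \int_{B(x,1)} \widetilde{w}(y)\,dy \geq 2^{-N_0/2}|B(0,1)|\,w(x),
\end{equation*}
which is precisely the desired inequality, with an implicit constant depending only on $d$ and $N_0$.

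There is no real obstacle here; the only subtlety is making sure one does not try to dominate $w(x)$ directly by $\Psi^{(N_0)} \ast w(x)$, which would fail when $w$ has small support near $x$. The role of passing first to the local supremum $\widetilde{w}$ is exactly to thicken $w$ enough that any unit-scale average picks up $w(x)$, after which the slow decay of $\Psi^{(N_0)}$ is more than sufficient.
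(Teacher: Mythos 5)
Your proof is correct, and it is genuinely shorter than the paper's. The paper first reduces to the stronger statement $\widetilde{w}\lesssim\CA w$ (via the trivial bound $w\leq\widetilde{w}$), translates to the origin, and then establishes $\widetilde{w}(0)\lesssim\int_{|y|\leq 1}\widetilde{w}(y)\,dy$ by a pigeonhole argument: it partitions the unit ball into the $2^d$ coordinate hyperoctants and shows by contradiction that on at least one of them $\widetilde{w}\geq\widetilde{w}(0)$ pointwise. You sidestep all of this by observing that for every $y\in B(x,1)$ the point $x$ itself is admissible in the supremum defining $\widetilde{w}(y)$, so $\widetilde{w}(y)\geq w(x)$ on the whole ball $B(x,1)$; combined with the lower bound $\Psi^{(N_0)}\gtrsim 1$ on the unit ball this gives $\CA w(x)\gtrsim w(x)$ directly. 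The trade-off is that the paper's argument actually proves the stronger pointwise inequality $\widetilde{w}\lesssim\CA w$, which yours does not; but since only $w\lesssim\CA w$ is used in the sequel, your more elementary route fully suffices for the stated lemma.
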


\begin{proof}
It is trivial to observe that $w \leq \widetilde{w}$, so we only need to show $\widetilde{w} \lesssim \CA w$. By translation invariance, it is enough to see that
$$
\widetilde{w}(0) \lesssim \CA w(0).
$$
As $\widetilde{w}\geq 0$ and $\Psi^{(N_0)}(y) \gtrsim 1$ for $|y|\leq 1$,
$$
\CA w(0)=\int_{\R^d} \frac{1}{(1+|y|^2)^{N_0/2}}\widetilde{w}(y)dy \gtrsim \int_{|y|\leq 1} \widetilde{w}(y)dy.
$$
Let $B_1, \dots, B_{2^d}$ be the intersections of the unit ball with the $2^d$ coordinate hyperoctants of $\R^d$. It is enough to show that there exists $\ell^* \in \{1,\dots,2^d\}$ such that $\widetilde{w}(y)\geq \widetilde{w}(0)$ for all $y \in B_{\ell^*}$, as then
$$
\CA \widetilde{w}(0) \gtrsim \int_{|y|\leq 1} \widetilde{w}(y)dy = \int_{B_{\ell^*}} \widetilde{w}(y)dy + \sum_{\ell \neq \ell^*} \int_{B_\ell} \widetilde{w}(y)dy \geq |B_{\ell^*}| \widetilde{w}(0) \gtrsim \widetilde{w}(0),
$$
which would conclude the proof. We prove our claim by contradiction. Suppose that for each $1 \leq \ell \leq 2^d$ there exist $y_\ell \in B_\ell$ such that $\widetilde{w}(y_\ell) < \widetilde{w}(0)$. By the definition of $\widetilde{w}$,
$$
\sup_{|z-y_\ell|\leq 1} w(z) < \widetilde{w}(0) \:\:\:\: \text{ for } \:\:\:\: 1 \leq \ell \leq 2^d.
$$
As
$$
\{|z| \leq 1\} \subseteq \bigcup_{\ell=1}^{2^d} \{|z-y_\ell| \leq 1\},
$$
we have
$$
\widetilde{w}(0)= \sup_{|z|\leq 1} w(z) \leq \sup_{\bigcup_{\ell=1}^{2^d} \{|z-y_\ell| \leq 1\}} w(z) = \max_{1\leq \ell \leq 2^d} \sup_{|z-y_\ell| \leq 1} w(z) < \max_{1 \leq \ell \leq 2^d} \widetilde{w}(0) = \widetilde{w}(0),
$$
which is of course a contradiction.
\end{proof}

The above lemma reduces the proof of Proposition \ref{S000} to the weighted inequality
\begin{equation}\label{S000maxop}
\int_{\R^d} |T_a f|^2 \mathcal{A}w \lesssim \int_{\R^d} |f|^2 \CA w.
\end{equation}
Defining the operator $S f:=T_a ((\mathcal{A}w)^{-1/2}f) (\mathcal{A}w)^{1/2}$, it is enough to show
\begin{equation}\label{L2}
\int_{\R^d} |Sf|^2 \lesssim \int_{\R^d} |f|^2
\end{equation}
with bounds independent of $w$; \eqref{S000maxop} just follows by taking $f=(\CA w)^{1/2} f$ in \eqref{L2}. Observe first that $(\CA w)^{\ell}$ is a well-defined function for any $\ell \in \R$, as $\CA w >0$. Also, the operator $S$ is well-defined for $f \in \mathcal{S}$; this is due to the fact that any power of $\CA w$ has polynomial growth, as well as all its derivatives, see the forthcoming Lemma \ref{PolyGrowth}. Leibniz's formula ensures then that $(\CA w)^\ell f \in \mathcal{S}$ for any $\ell \in \R$, and that $S$ maps $\CS$ to $\CS$.

\begin{lemma}\label{PolyGrowth}
For any $\ell \in \R$ and any $\gamma \in \N^d$, 
$$|D^\gamma (\CA w)^\ell(x)|\lesssim (\CA w)^{\ell}(x) \lesssim (1+|x|^2)^{N_0|\ell|/2} (\CA w)^{\ell}(0).$$
\end{lemma}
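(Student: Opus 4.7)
The plan is to deduce both inequalities from a Peetre-type comparison for the kernel $\Psi^{(N_0)}$. The elementary bound $|y|^2\leq 2|x|^2+2|x-y|^2$ gives $1+|y|^2\leq 2(1+|x|^2)(1+|x-y|^2)$, whence
$$\Psi^{(N_0)}(x-y)\leq 2^{N_0/2}(1+|x|^2)^{N_0/2}\Psi^{(N_0)}(y).$$
Multiplying by $\widetilde w(y)$ and integrating against $y$ produces $\CA w(x)\lesssim (1+|x|^2)^{N_0/2}\CA w(0)$, and swapping the roles of the two translates $y$ and $x-y$ in the same estimate gives the reverse comparison $\CA w(0)\lesssim (1+|x|^2)^{N_0/2}\CA w(x)$. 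Since $\CA w>0$ (the lemma is trivial when $w\equiv 0$), raising to the $\ell$-th power and choosing the direction of the comparison according to the sign of $\ell$ yields the second inequality of the statement with exponent $|\ell|$.

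For the derivative estimate I would first handle the case $\ell=1$. A straightforward induction on $|\gamma|$ shows that $D^\gamma\Psi^{(N_0)}$ is a finite linear combination of functions of the form $x^\alpha(1+|x|^2)^{-N_0/2-|\beta|}$ with $|\alpha|\leq|\beta|\leq|\gamma|$, each of which is pointwise dominated by $\Psi^{(N_0)}$. Commuting the derivative through the convolution then gives
$$|D^\gamma\CA w(x)|\leq (|D^\gamma\Psi^{(N_0)}|\ast\widetilde w)(x)\lesssim \CA w(x).$$

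To pass to arbitrary $\ell\in\R$, I would invoke Faà di Bruno's formula applied to the composition of the smooth map $t\mapsto t^\ell$ on $(0,\infty)$ with the positive function $\CA w$. This expresses $D^\gamma(\CA w)^\ell$ as a finite linear combination of terms of the form $(\CA w)^{\ell-k}\prod_{j=1}^{k} D^{\gamma_j}\CA w$ with $\gamma_1+\cdots+\gamma_k=\gamma$, and the bound $|D^{\gamma_j}\CA w|\lesssim \CA w$ just proved controls each such term by $(\CA w)^{\ell-k}(\CA w)^k=(\CA w)^\ell$. Summing the finitely many terms yields the first inequality. The whole argument is essentially bookkeeping once the Peetre inequality is in place; no substantial obstacle arises, the only mildly delicate point being the case $\ell<0$, which is cleanly handled by the positivity of $\CA w$ and the smoothness of $t\mapsto t^\ell$ on $(0,\infty)$.
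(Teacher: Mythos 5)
Your argument is correct and essentially the same as the paper's: the comparison $\CA w(0)(1+|x|^2)^{-N_0/2}\lesssim\CA w(x)\lesssim(1+|x|^2)^{N_0/2}\CA w(0)$ is exactly Lemma \ref{HarnackGoodDecay} (which the paper invokes and you re-derive via the Peetre inequality), and the derivative bound combines $|D^\gamma\Psi^{(N_0)}|\lesssim\Psi^{(N_0)}$ with the chain rule, which you make explicit as Fa\`a di Bruno. No gaps.
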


\begin{proof}
From the trivial fact that $|D^\gamma \Psi^{(N_0)} (x)| \lesssim \Psi^{(N_0)}(x)$ for any $\gamma \in \N^d$, by definition of $\CA$ we have 
$$|D^\gamma \CA w(x)| \leq |D^\gamma \Psi^{(N_0)}| \ast \widetilde{w} (x) \lesssim \Psi^{(N_0)} \ast \widetilde{w} (x) = \CA w (x),$$
as $\widetilde{w} \geq 0$. The chain rule quickly reveals
$$
|D^\gamma (\CA w)^{\ell}(x)|\lesssim (\CA w)^{\ell}(x).
$$
For the second inequality, by Lemma \ref{HarnackGoodDecay} in Appendix \ref{app:Phi}, one has
$$
\CA w (0)\frac{1}{(1+|x|^2)^{N_0/2}} \lesssim \CA w(x) \lesssim (1+|x|^2)^{N_0/2} \CA w(0).
$$
Then, if $\ell>0$, $(\CA w)^\ell (x) \lesssim (1+|x|^2)^{N_0 \ell/2} \CA w(0)$, and if $\ell<0$, $(\CA w)^{\ell}(x) \lesssim (1+|x|^2)^{N_0 |\ell|/2} (\CA w)^\ell (0)$, which concludes the proof.
\end{proof}
%
%
%

We shall prove the $L^2$-boundedness of the operator $S$ from an application of the Cotlar--Stein principle to a suitable family of operators. To construct such a family we introduce the following partition of unity. Let $\psi$ be a smooth, nonnegative function supported in the unit cube $Q=\{x \in \R^d : |x_j| \leq 1\}$ and such that
\begin{equation}\label{eqspaced}
\sum_{i \in \Z^d} \psi(x-i)=1,
\end{equation}
and let $a_{\textbf{i}}(x,\xi)=a(x,\xi)\psi(x-i)\psi(\xi-i')$, where $\textbf{i}=(i,i')$. Then
$$
a=\sum_{\textbf{i} \in \Z^{2d}} a_{\textbf{i}}.
$$
This gives a decomposition of the space associated to the $\xi$ variable into balls of radius $O(1)$. Note that in the passage of rescaling the symbol class $S_{0,0}^0$ into $S^0_{1-\alpha,1-\alpha}$, this amounts to a decomposition of the dyadic annulus $\{|\xi| \sim R\}$ into $O(R^{\alpha d})$ balls of radius $O(R^{1-\alpha})$; this would correspond to the prototypical example of the $\alpha$-subdyadic decomposition aforementioned in Section \ref{sec:Multiplier}.\footnote{The decomposition given by $\psi$ was used in the proof of the $L^2$-boundedness of the class $S_{0,0}^0$ that one may find in \cite{bigStein} and it is a prior instance of the subdyadic analysis further developed in \cite{BCSV,BH,Ben2014,BB}.}

This decomposition allows us to write the operator $S$ as
$$
Sf=\sum_{\textbf{i} \in \Z^{2d}} S_{\textbf{i}}f,
$$
where $S_{\textbf{i}}f=T_{a_{\textbf{i}}} ((\mathcal{A}w)^{-1/2}f) (\mathcal{A}w)^{1/2}$. We aim to apply Lemma \ref{Cotlar--Stein} to the family of operators $\{S_\bi\}_{\bi \in \Z^{2d}}$. To this end we need to establish
$$
\|S_\bi^* S_\bj\|_{2 \to 2} \lesssim c(\bi-\bj)^2
$$
and
$$
\|S_\bi S_\bj^*\|_{2 \to 2} \lesssim c(\bi-\bj)^2
$$
for a family of constants $\{c(\bi)\}_{\bi \in \Z^{2d}}$ such that
$$
\sum_{\bi \in \Z^{2d}} c(\bi) < \infty.
$$
Observe that $S^*_{\textbf{i}}f=(\mathcal{A}w)^{-1/2}T_{a_{\textbf{i}}}^* ((\mathcal{A}w)^{1/2}f)$, where
$$
T_{a_\bi}^*g(y)=\int_{\R^d} \int_{\R^d} e^{i \xi \cdot (y-z)} \overline{a}_{\bi}(z,\xi)g(z) d\xi dz
$$
is a well-defined operator that maps $\mathcal{S}$ to $\mathcal{S}$. The decomposition of the $x$ variable via \eqref{eqspaced} ensures the kernel of the operator $S_\bi^* S_\bj$ to be well defined; also the symmetric role of the $x$ and $\xi$ variables in $a(x,\xi) \in S_{0,0}^{0}$ suggests such a decomposition in the $x$ variable. 

\subsubsection{The $L^2$-boundedness of $S_\bi^* S_\bj$} The operator $S_\bi^* S_\bj$ may be realised as
\begin{align*}
S_\bi^*(S_\bj f)(x)& =(\CA w)^{-1/2}(x) T_{a_\bi}^*(\CA w \; T_{a_\bj}((\CA w)^{-1/2} f))(x) \\
& = (\CA w)^{-1/2}(x)\int_{\R^d} K_{\bi,\bj}(x,z)f(z) (\CA w)^{-1/2}(z)dz,
\end{align*}
where
$$
K_{\bi,\bj}(x,z):=\int_{\R^d}\int_{\R^d}\int_{\R^d} e^{i \xi \cdot (x-y)} e^{i \eta \cdot (y-z)} \overline{a_\bi}(y,\xi)a_\bj (y,\eta) \CA w(y)dy d\eta d\xi;
$$
observe that this kernel is well-defined by the support properties of $a_\bi$ and $a_\bj$. Note that if $i-j \not \in Q$, then $K_{\bi,\bj}=0$.

Integrating by parts in $K_{\bi,\bj}$, after making use of the identities
$$
(I-\Delta_y)^{N_1}e^{i y \cdot (\eta-\xi)}=(1+|\xi-\eta|^2)^{N_1}e^{i y \cdot (\eta-\xi)},
$$
$$
(I-\Delta_\eta)^{N_2}e^{i \eta \cdot (y-z)}=(1+|y-z|^2)^{N_2}e^{i \eta \cdot (y-z)}
$$
and
$$
(I-\Delta_\xi)^{N_3}e^{i \xi \cdot (x-y)}=(1+|x-y|^2)^{N_3}e^{i \eta \cdot (x-y)},
$$
leads to
\begin{align*}
K_{\bi,\bj}(x,z) = \int_{\R^d}\int_{\R^d}\int_{\R^d} & e^{i \xi \cdot (x-y)} e^{i \eta \cdot (y-z)} \frac{(I-\Delta_\xi)^{N_3}}{(1+|x-y|^2)^{N_3}} \frac{(I-\Delta_\eta)^{N_2}}{(1+|y-z|^2)^{N_2}}  \Big( \frac{(I-\Delta_y)^{N_1}}{(1+|\xi-\eta|^2)^{N_1}} (\overline{a_\bi}(y,\xi)a_\bj(y,\eta)\CA w(y)) \Big) dy d\eta d\xi,
\end{align*}
for any $N_1,N_2,N_3 \geq 0$. Observe that $|D^\gamma \psi (y- k)| \leq \|\psi\|_{C^{|\gamma|}} \chi(y-k)$ for any multi-index $\gamma \in \N^d$, where $\chi$ is the characteristic function of $Q$. This, Lemma \ref{PolyGrowth}, and the differential inequalities satisfied by the symbols $a_\bi, a_\bj \in S^0_{0,0}$, allows us to deduce, after an application of Leibniz's formula,
%
\begin{align}
|K_{\bi,\bj}(x,z)| & \lesssim \int_{\R^d}\int_{\R^d} \frac{\chi(\xi-i')\chi(\eta-j')}{(1+|\xi-\eta|^2)^{N_1}}d\xi d\eta \int_{\R^d} \frac{\CA w(y) \chi(y-i) \chi(y-j)}{(1+|y-z|^2)^{N_2}(1+|y-x|^2)^{N_3}}dy \notag \\
& \lesssim \frac{1}{(1+|i'-j'|^2)^{N_1}} \int_{\R^d} \frac{\CA w(y) \chi(y-i) \chi(y-j)}{(1+|y-z|^2)^{N_2}(1+|y-x|^2)^{N_3}}dy; \label{Kbound}
\end{align}
the implicit constant here depends on finitely many $C^k$ norms of $\psi$. Now we apply Schur's test to the kernel
$$
\widetilde{K_{\bi,\bj}}(x,z)=K_{\bi,\bj}(x,z) (\CA w)^{-1/2}(x) (\CA w)^{-1/2}(z)
$$
with the auxiliary functions $h_1=h_2=(\CA w)^{1/2}$. We check first that the integral condition with respect to $z$ is satisfied. Observe that from Lemma \ref{LemmaConv} in Appendix \ref{app:Phi}, $(\CA w) \ast \Psi^{(N_0)} \lesssim \CA w$. Using this, and taking $2N_2=2N_3=N_0>d$ in \eqref{Kbound}, we have
\begin{align*}
\int_{\R^d} |\widetilde{K_{\bi,\bj}}(x,z)|h_1(z)dz & \lesssim \frac{(\CA w)^{-1/2}(x)}{(1+|i'-j'|^2)^{N_1}} \int_{\R^d} \int_{\R^d} \frac{\CA w(y) \chi(y-i) \chi(y-j)}{(1+|y-z|^2)^{N_2}(1+|y-x|^2)^{N_3}}dzdy \\
& \lesssim \frac{(\CA w)^{-1/2}(x)}{(1+|i'-j'|^2)^{N_1}} \int_{\R^d} \frac{\CA w(y)}{(1+|y-x|^2)^{N_3}}dy \\
& \lesssim \frac{(\CA w)^{1/2}(x)}{(1+|i'-j'|^2)^{N_1}}, \:\:\:\: \text{ if } i-j \in Q,
\end{align*}
for any $N_1 \geq 0$. On the other hand, $\widetilde{K_{\bi,\bj}}=0$ if $i-j \not \in Q$, so combining both cases,
$$
\int_{\R^d} |\widetilde{K_{\bi,\bj}}(x,z)|h_1(z)dz \lesssim \frac{(\CA w)^{1/2}(x)}{(1+|\bi-\bj|^2)^{N_1}},
$$
for any $N_1 \geq 0$. As the integral condition with respect to the $x$ variable is symmetric, Lemma \ref{SchurLemma} yields
\begin{equation}\label{S^*S}
\|S_\bi^* S_\bj\|_{2 \to 2} \lesssim \frac{1}{(1+|\bi-\bj|^2)^{N_1}}
\end{equation}
for any $N_1\geq 0$. The constant $c(\bi)=(1+|\bi|^2)^{-N_1/2}$ will be sufficient for an application of the Cotlar--Stein lemma.

\subsubsection{The $L^2$-boundedness of $S_\bi S_\bj^*$}

Our goal now is to see that $\|S_\bi S_\bj^*\|_{2 \to 2}$ also satisfies the bound \eqref{S^*S}. The operator $S_\bi S_\bj^*$ may be realised as
\begin{align*}
S_\bi(S_\bj^* f)(x)& =(\CA w)^{1/2}(x)T_{a_\bi} ((\CA w)^{-1} T_{a_\bj}^* ((\CA w)^{1/2}f))(x) \\
& = (\CA w)^{1/2}(x) \int_{\R^d} \int_{\R^d}  e^{i \xi \cdot (x-y)} a_\bi (x,\xi) (\CA w)^{-1}(y)T_{a_\bj}^*(f(\CA w)^{1/2})(y)dy d\xi \\
& = (\CA w)^{1/2}(x) \int_{\R^d} L_{\bi,\bj}(x,z) f(z) (\CA w)^{1/2}(z) dz,
\end{align*}
where $L_{\bi,\bj}$ is taken to be the formal sum
\begin{equation}\label{Lijdef}
L_{\bi,\bj}(x,z):= \sum_{k \in \Z^d} L_{\bi,\bj}^k (x,z)
\end{equation}
and
$$
L^k_{\bi,\bj}(x,z):=\int_{\R^d}\int_{\R^d}\int_{\R^d}e^{i \xi \cdot (x-y)} e^{i \eta \cdot (y-z)} a_\bi (x,\xi) \overline{a_\bj}(z,\eta) (\CA w)^{-1}(y)\psi(y-k) dy d\xi d\eta.
$$
Observe that, a priori, the formal sum
$$
L_{\bi,\bj}(x,z)= \sum_{k \in \Z^d} L_{\bi,\bj}^k (x,z)= \int_{\R^d}\int_{\R^d}\int_{\R^d}e^{i \xi \cdot (x-y)} e^{i \eta \cdot (y-z)} a_\bi (x,\xi) \overline{a_\bj}(z,\eta) (\CA w)^{-1}(y)dy d\xi d\eta,
$$
may not be well-defined, as the triple integral in the right hand side does not necessarily converge absolutely. For this reason, we introduce the partition of unity \eqref{eqspaced} in the $y$ variable; the integral that defines $L_{\bi,\bj}^k$ is now absolutely convergent. Our analysis below shows, in particular, that such a sum is finite.
%

Again, integration by parts with respect to $y,\eta,\xi$ gives
\begin{align*}
L_{\bi,\bj}(x,z)&= \sum_{k \in \Z^d} \int_{\R^d}\int_{\R^d}\int_{\R^d}e^{i \xi \cdot (x-y)} e^{i \eta \cdot (y-z)} \frac{(I-\Delta_\xi)^{N_3}}{(1+|x-y|^2)^{N_3}} \frac{(I-\Delta_\eta)^{N_2}}{(1+|y-z|^2)^{N_2}} \Big( \frac{a_\bi (x,\xi) \overline{a_\bj}(z,\eta)}{(1+|\eta-\xi|^2)^{N_1}}\Big) \times \\
& \hspace{3cm} \times (I-\Delta_y)^{N_1} \big((\CA w)^{-1}(y)\psi(y-k)\big) dy d\xi d\eta,
\end{align*}
for any $N_1,N_2,N_3 \geq 0$, and the same observations as in the previous case allows us to deduce, after an application of Leibniz's formula, $|L_{\bi,\bj}(x,z)|$ is bounded by
\begin{align*}
\sum_{k \in \Z^d} \int_{\R^d}\int_{\R^d}\int_{\R^d} \frac{\chi(x-i)\chi(\xi-i')}{(1+|x-y|^2)^{N_3}} \frac{\chi(z-j)\chi(\eta-j')}{(1+|y-z|^2)^{N_2}} \frac{(\CA w)^{-1}(y) \chi(y-k)}{(1+|\eta-\xi|^2)^{N_1}}  dy d\xi d\eta.
\end{align*}
As the functions $\{\chi(\cdot-k)\}_{k \in \Z^d}$ have bounded overlap, we may sum in the $k$ variable and
\begin{align}
|L_{\bi,\bj}(x,z)| & \lesssim \int_{\R^d}\int_{\R^d} \frac{\chi(\xi-i')\chi(\eta-j')}{(1+|\eta-\xi|^2)^{N_1}} d\xi d\eta  \int_{\R^d}  \frac{(\CA w)^{-1}(y)}{(1+|y-z|^2)^{N_2}} \frac{ \chi(z-j) \chi(x-i)}{(1+|x-y|^2)^{N_3}} dy \notag \\
& \lesssim \frac{1}{(1+|i'-j'|^2)^{N_1}} \int_{\R^d}  \frac{(\CA w)^{-1}(y)}{(1+|y-z|^2)^{N_2}} \frac{ \chi(z-j) \chi(x-i)}{(1+|x-y|^2)^{N_3}} dy. \label{Lbound}
\end{align}
The integration in the $y$ variable is finite, so the sum taken in the definition of $L_{\bi,\bj}$ in \eqref{Lijdef} is well defined. In particular, for $N_2=N_3>N_0+d$, it is possible to show that
\begin{equation}\label{ConditionSchur}
\int_{\R^d} \int_{\R^d} \frac{(\CA w)^{-1}(y)}{(1+|y-z|^2)^{N_2}} \frac{ \chi(z-j) \chi(x-i)}{(1+|x-y|^2)^{N_2}} dz dy \lesssim  \frac{(\CA w)^{-1} (x)}{(1+|i-j|^2)^{N_2/2}}.
\end{equation}
As the role of the variables $x$ and $z$ is symmetric here, the same follows with $(\CA w)^{-1}(x)$ replaced by $(\CA w)^{-1}(z)$ in the right hand side of \eqref{ConditionSchur}. 

Assuming the estimate \eqref{ConditionSchur} is true, one may successfully apply Schur's test to the kernel
$$
\widetilde{L}_{\bi,\bj}(x,z)=L_{\bi,\bj}(x,z) (\CA w)^{1/2} (x) (\CA w)^{1/2} (z)
$$
with auxiliary functions $h_1=h_2=(\CA w)^{-1/2}$. Using \eqref{Lbound} and \eqref{ConditionSchur}, we have
\begin{align*}
\int_{\R^d} |\widetilde{L}_{\bi,\bj}(x,z)|h_1(z)dz& \lesssim  \frac{(\CA w)^{1/2} (x)}{(1+|i'-j'|^2)^{N_1}} \int_{\R^d} \int_{\R^d}  \frac{(\CA w)^{-1}(y)}{(1+|y-z|^2)^{N_2}} \frac{ \chi(z-j) \chi(x-i)}{(1+|x-y|^2)^{N_2}} dy dz \\
& \lesssim \frac{(\CA w)^{1/2} (x)}{(1+|i'-j'|^2)^{N_1}} \frac{(\CA w)^{-1}(x)}{(1+|i-j|)^{N_2/2}} \\
& \lesssim \frac{(\CA w)^{-1/2}(x)}{(1+|\bi-\bj|^2)^{N_2/2}},
\end{align*}
for $N_2>N_0+d$; the last inequality follows from taking $N_1=N_2/2$. As the integral condition with respect to the $x$ variable is symmetric, an application of Lemma \ref{SchurLemma} yields
$$
\|S_\bi S_\bj^*\|_{2 \to 2} \lesssim \frac{1}{(1+|\bi-\bj|^2)^{N_2/2}}
$$
for any $N_2>N_0+d$. 

\subsubsection{The $L^2$-boundedness of $S$}

We just saw that the family of operators $\{S_\bi\}_{i \in \Z^{2d}}$ satisfies the bounds
\begin{equation}\label{boundi*j}
\|S_\bi^* S_\bj\|_{2 \to 2} \lesssim \frac{1}{(1+|\bi-\bj|^2)^{N_1}},
\end{equation}
for any $N_1 \geq 0$, and
$$
\|S_\bi S_\bj^*\|_{2 \to 2} \lesssim \frac{1}{(1+|\bi-\bj|^2)^{N_2/2}}
$$
for any $N_2>N_0+d$. Taking $N_1=N_2/2$ in \eqref{boundi*j} and noting that the series
$$
\sum_{\bi \in \Z^{2d}} \frac{1}{(1+|\bi|^2)^{N_2/4}} < \infty
$$
for $N_2/2 > 2d$, an application of the Cotlar--Stein almost orthogonality principle (Lemma \ref{Cotlar--Stein}) to the family of operators $\{S_\bi\}_{i \in \Z^{2d}}$ ensures that
$$
\|S\|_{2 \to 2} \leq \sum_{\bi \in \Z^{2d}} \frac{1}{(1+|\bi|^2)^{N_2/4}} < \infty,
$$
provided $N_2 > \max\{N_0+d, 4d\}$. As we may choose $N_2$ as large as we please, the estimate \eqref{L2} follows. This finishes the proof of Theorem \ref{MainThm}, provided the estimate \eqref{ConditionSchur} is shown to be true.

\subsubsection{The validity of the estimate \eqref{ConditionSchur}}

At this stage we are only left with proving \eqref{ConditionSchur}, that is
\begin{equation*}
\int_{\R^d} \int_{\R^d} \frac{(\CA w)^{-1}(y)}{(1+|y-z|^2)^{N_2}} \frac{ \chi(z-j) \chi(x-i)}{(1+|x-y|^2)^{N_2}} dz dy \lesssim  \frac{(\CA w)^{-1} (x)}{(1+|i-j|^2)^{N_2/2}}.
\end{equation*}
To this end, we divide the range for the $y$-integration into two half-spaces, $H_x$ and $H_z$, that contain the points $x$ and $z$ respectively and that are the result of splitting $\R^d$ by a hyperplane perpendicular to the line segment joining $x$ and $z$ at its midpoint. Note that for $y \in H_x$, $|y-z| \geq \frac{1}{2} |x-z|$, so
$$
\frac{1}{(1+|y-z|^2)^{N_2}} \leq \frac{2^{2N_2}}{(1+|x-z|^2)^{N_2}}
$$
and
\begin{align*}
\int_{\R^d} \int_{H_x} \frac{(\CA w)^{-1}(y)}{(1+|y-z|^2)^{N_2}} \frac{ \chi(z-j) \chi(x-i)}{(1+|x-y|^2)^{N_2}} dydz & \lesssim \int_{\R^d} \frac{\chi(z-j) \chi(x-i)}{(1+|x-z|^2)^{N_2}} dz \int_{H_x} \frac{(\CA w)^{-1}(y)}{(1+|x-y|^2)^{N_2}}dy \\
& \lesssim \frac{1}{(1+|i-j|^2)^{N_2/2}} \int_{\R^d} \frac{(\CA w)^{-1}(y)}{(1+|x-y|^2)^{N_2/2}}dy.
\end{align*}
Similarly, for $y \in H_z$, $|x-y| \geq \frac{1}{2} |x-z|$, so
$$
\frac{1}{(1+|x-y|^2)^{N_2}} \leq \frac{2^{2N_2}}{(1+|x-z|^2)^{N_2}}
$$
and
\begin{align*}
\int_{\R^d} \int_{H_z} \frac{(\CA w)^{-1}(y)}{(1+|y-z|^2)^{N_2}} \frac{ \chi(z-j) \chi(x-i)}{(1+|x-y|^2)^{N_2}} dydz & \lesssim \int_{\R^d} \int_{H_z} \frac{(\CA w)^{-1}(y)}{(1+|y-z|^2)^{N_2}} \frac{ \chi(z-j) \chi(x-i)}{(1+|x-z|^2)^{N_2}} dydz.
\end{align*}
By the elementary inequality
$$
\frac{1}{(1+|y-z|^2)^{N_2/2}} \frac{1}{(1+|x-z|^2)^{N_2/2}} \lesssim \frac{1}{(1+|x-y|^2)^{N_2/2}},
$$
which is a simple consequence of the triangle inequality, we have
\begin{align*}
\int_{\R^d} \int_{H_z} \frac{(\CA w)^{-1}(y)}{(1+|y-z|^2)^{N_2}} \frac{ \chi(z-j) \chi(x-i)}{(1+|x-z|^2)^{N_2}} dydz & \lesssim \int_{\R^d} \frac{ \chi(z-j) \chi(x-i)}{(1+|x-z|^2)^{N_2/2}} dz \int_{H_z} \frac{(\CA w)^{-1}(y)}{(1+|x-y|^2)^{N_2/2}}  dy \\
& \lesssim \frac{1}{(1+|i-j|^2)^{N_2/2}} \int_{\R^d} \frac{(\CA w)^{-1}(y)}{(1+|x-y|^2)^{N_2/2}}dy. 
\end{align*}
Putting both estimates together,
$$
\int_{\R^d} \int_{\R^d} \frac{(\CA w)^{-1}(y)}{(1+|y-z|^2)^{N_2}} \frac{ \chi(z-j) \chi(x-i)}{(1+|x-y|^2)^{N_2}} dz dy \lesssim \frac{(\CA w)^{-1} \ast \Psi^{(N_2)} (x)}{(1+|i-j|^2)^{N_2/2}},
$$
so the inequality \eqref{ConditionSchur} is satisfied if
$$
(\CA w)^{-1} \ast \Psi^{(N_2)}(x) \lesssim (\CA w)^{-1} (x).
$$
As $\widetilde{w} \geq 0$, by Lemma \ref{HarnackGoodDecay},
$$
\Psi^{(N_0)} \ast \widetilde{w} (x) \geq \frac{1}{(1+|x-y|^2)^{N_0/2}} \Psi^{(N_0)} \ast \widetilde{w} (y),
$$
so by definition of $\CA w$,
$$
(\CA w)^{-1}(x) \leq(1+|x-y|^2)^{N_0/2} (\CA w)^{-1} (y);
$$
in particular
$$
(\CA w)^{-1}(x-y) \leq (1+|y|^2)^{N_0/2} (\CA w)^{-1} (x).
$$
Thus
\begin{align*}
(\CA w)^{-1} \ast \Psi^{(N_2)} (x) &= \sum_{l \in \Z^d} \int_{l +[0,1]^d} (\CA w)^{-1}(x-y)\Psi^{(N_2)}(y)dy \\
&  \leq \sum_{l \in \Z^d} (\CA w)^{-1}(x) \int_{l+[0,1]^d} (1+|y|^2)^{(N_0-N_2)/2}dy \\
& \lesssim (\CA w)^{-1}(x) \sum_{l \in \Z^d} (1+|l|^2)^{(N_0-N_2)/2} \\
&\lesssim (\CA w)^{-1} (x), 
\end{align*}
provided $N_2>N_0+d$, and the inequality \eqref{ConditionSchur} follows.


\appendix

\section{Properties of $\Psi^{(N)}$}\label{app:Phi}

Here we briefly recall some elementary properties of the function $\Psi^{(N)}_R(x):=\frac{R^{d}}{(1+R^2|x|^2)^{N/2}}$ to which we appealed to in our proof of Theorem \ref{MainThm}.

\begin{lemma}\label{LemmaConv}
Let $N>d$. Let $R \geq K$ denote two different scales. Then $\Psi_R^{(N)} \ast \Psi_K^{(N)} \lesssim \Psi_K^{(N)}.$
\end{lemma}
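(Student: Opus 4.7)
The plan is to split the convolution integral according to the location of the integration variable relative to $x$. Specifically, I would write
$$\Psi_R^{(N)} \ast \Psi_K^{(N)}(x) = \int_{|y|\geq |x|/2} \Psi_R^{(N)}(x-y)\Psi_K^{(N)}(y)\,dy + \int_{|y|\leq |x|/2} \Psi_R^{(N)}(x-y)\Psi_K^{(N)}(y)\,dy =: I_A + I_B,$$
and handle the two pieces differently. The natural splitting radius is dictated by the position of $x$ rather than by either scale $1/R$ or $1/K$ individually.

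For $I_A$, the key point is that $\Psi_K^{(N)}$ is radially decreasing and enjoys the doubling bound $\Psi_K^{(N)}(y) \leq 4^{N/2}\Psi_K^{(N)}(x)$ whenever $|y|\geq |x|/2$, which follows at once from the elementary estimate $(1+K^2|x|^2)/(1+K^2|x|^2/4) \leq 4$. Pulling $\Psi_K^{(N)}(x)$ outside the integral and using that $\|\Psi_R^{(N)}\|_{L^1}$ is a finite constant (depending only on $N$ and $d$, by the hypothesis $N>d$), one immediately obtains $I_A \lesssim \Psi_K^{(N)}(x)$.

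For $I_B$, on which $|x-y|\geq |x|/2$, I would further distinguish two regimes according to the size of $|x|$. If $|x| \leq 1/R$, the integration domain has volume $\lesssim |x|^d$ and the integrand is dominated pointwise by $R^dK^d$, yielding $I_B \lesssim R^dK^d|x|^d \leq K^d \sim \Psi_K^{(N)}(x)$ (since $R|x|\leq 1$ forces $K|x|\leq 1$ as well). If instead $|x| \geq 1/R$, then $R|x-y|\geq 1/2$, which allows one to invoke the power decay $\Psi_R^{(N)}(x-y) \lesssim R^{d-N}|x-y|^{-N}$; combining this with $|x-y|\sim |x|$ throughout the region gives $I_B \lesssim R^{d-N}|x|^{-N}\int_{|y|\leq|x|/2}\Psi_K^{(N)}(y)\,dy$. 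An explicit evaluation of the $\Psi_K^{(N)}$-mass of the ball $\{|y|\leq|x|/2\}$ — comparable to $\min(K^d|x|^d,\,1)$ — together with either $(R|x|)^{d-N}\leq 1$ (when $K|x|\leq 1$) or $(R/K)^{d-N}\leq 1$ (when $K|x|\geq 1$), then gives $I_B \lesssim \Psi_K^{(N)}(x)$ in both regimes.

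The main obstacle is handling the intermediate range $1/R\leq |x|\leq 1/K$. Here the crude bound $\Psi_R^{(N)}(x-y)\lesssim \Psi_R^{(N)}(x)$ would yield the unwanted factor $(R/K)^d$, while $\Psi_K^{(N)}(y)\leq K^d$ alone produces a bound by $K^d$ that is too loose when $|x|$ approaches $1/K$. The fix, as above, is to combine the off-diagonal decay of $\Psi_R^{(N)}$ with an explicit computation of the $\Psi_K^{(N)}$-mass of the small ball $\{|y|\leq|x|/2\}$, producing the saving factor $(R|x|)^{d-N}\leq 1$ that absorbs the excess $R^d/K^d$ scaling. Throughout, the hypothesis $N>d$ is essential, both to guarantee $\|\Psi_R^{(N)}\|_{L^1}<\infty$ and to ensure that radial tail integrals of the form $\int |y|^{d-N-1}\,d|y|$ are dominated by their lower endpoints.
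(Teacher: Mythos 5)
Your proof is correct and follows essentially the same strategy as the paper's: split the $y$-integral according to whether $|y|\geq |x|/2$ or $|x-y|\geq |x|/2$, use the radial doubling of $\Psi_K^{(N)}$ on the outer piece, and use the power decay of $\Psi_R^{(N)}$ together with $R\geq K$ and $N>d$ on the inner piece. The paper realises the same dichotomy via two half-spaces through the perpendicular bisector of the segment $[0,x]$ (rather than concentric balls) and dispatches the range $K|x|\leq 1$ trivially at the outset, which trims the case bookkeeping, but the underlying estimates are the same.
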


\begin{proof}
We need to show
$$
\int_{\R^d} \frac{R^d}{(1+R^2|y-x|^2)^{N/2}} \frac{K^d}{(1+K^2|y|^2)^{N/2}}dy \lesssim \frac{K^d}{(1+K^2|x|^2)^{N/2}}
$$
for any $x \in \R^d$. Observe first that if $K|x| \leq 1$, the estimate is trivial, as
$$
\frac{K^d}{(1+K^2|y|^2)^{N/2}} \leq K^d \leq \frac{2^{N/2} K^d}{(1+K^2|x|^2)^{N/2}}
$$
and the integral
$$
\int_{\R^d} \frac{R^d}{(1+R^2|y-x|^2)^{N/2}} dy < \infty
$$
provided $N>d$.

If $K|x| \geq 1$, we divide $\R^d$ into two half-spaces $H_x$ and $H_0$, that contain the points $x$ and $0$ respectively and that are the result of splitting $\R^d$ by a hyperplane perpendicular to the line segment joining $x$ and the origin $0$ at its midpoint. If $y \in H_x$, then $|y|\geq |x|/2$ and
$$
\frac{K^d}{(1+K^2|y|^2)^{N/2}} \leq \frac{2^N K^d}{(1+K^2|x|^2)^{N/2}}.
$$
Thus
\begin{align*}
\int_{H_x}  \frac{R^d}{(1+R^2|y-x|^2)^{N/2}} \frac{K^d}{(1+K^2|y|^2)^{N/2}}dy & \leq \frac{2^N K^d}{(1+K^2|x|^2)^{N/2}} \int_{H_x} \frac{R^d}{(1+R^2|y-x|^2)^{N/2}} dy \\
&\lesssim \frac{K^d}{(1+K^2|x|^2)^{N/2}}.
\end{align*}
If $y \in H_0$, we have $|y-x| \geq |x|/2$. Similarly,
$$
\frac{R^d}{(1+R^2|y-x|^2)^{N/2}} \leq \frac{2^N R^d }{(1+R^2|x|^2)^{N/2}} \leq \frac{2^N R^d}{R^N |x|^N} = \frac{2^N R^{d-N}}{|x|^N}.
$$
As $R>K$, $N>d$ and $K|x| \geq 1$,
$$
\frac{2^N R^{d-N}}{|x|^N} \leq \frac{2^N K^{d-N}}{|x|^N} = \frac{2^N 2^{N/2} K^d}{(2K^2|x|^2)^{N/2}} \lesssim \frac{K^d}{(1+K^2|x|^2)^{N/2}}, 
$$
and arguing as in the previous case, this concludes the proof.
\end{proof}


For the case $R=1$, we simply denote $\Psi^{(N)}(x):=\frac{1}{(1+|x|^2)^{N/2}}$. We have the following Harnack-type property.

\begin{lemma}\label{HarnackGoodDecay}
For $w \geq 0$,
$$
w \ast \Psi^{(N)} (x) \gtrsim \frac{1}{(1+|x-y|^2)^{N/2}} w \ast \Psi^{(N)} (y).
$$
\end{lemma}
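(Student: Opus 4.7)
The plan is to reduce the Harnack-type property for $w \ast \Psi^{(N)}$ to a pointwise inequality on $\Psi^{(N)}$ itself, which in turn follows from a simple peeling inequality based on the triangle inequality. Concretely, I expect to show the pointwise bound
$$
\Psi^{(N)}(x-z) \gtrsim \frac{1}{(1+|x-y|^2)^{N/2}}\,\Psi^{(N)}(y-z)
$$
uniformly in $z$, after which the lemma follows by multiplying through by $w(z) \geq 0$ and integrating in $z$.

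The key inequality to establish is
$$
(1+|x-y|^2)(1+|y-z|^2) \gtrsim 1+|x-z|^2
$$
for every $x,y,z \in \R^d$, with an absolute implicit constant. This in turn follows from the triangle inequality: $|x-z|^2 \leq 2|x-y|^2 + 2|y-z|^2$, so
$$
1+|x-z|^2 \leq 1 + 2|x-y|^2 + 2|y-z|^2 \leq 2\bigl(1 + |x-y|^2 + |y-z|^2\bigr) \leq 2(1+|x-y|^2)(1+|y-z|^2).
$$
Raising to the power $N/2$ and rearranging gives exactly the desired pointwise control $\Psi^{(N)}(x-z) \gtrsim (1+|x-y|^2)^{-N/2} \Psi^{(N)}(y-z)$.

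Finally, since $w \geq 0$, integrating this pointwise estimate in $z$ preserves the inequality and produces
$$
w \ast \Psi^{(N)}(x) = \int_{\R^d} w(z)\Psi^{(N)}(x-z)\,dz \gtrsim \frac{1}{(1+|x-y|^2)^{N/2}}\int_{\R^d} w(z)\Psi^{(N)}(y-z)\,dz = \frac{1}{(1+|x-y|^2)^{N/2}}\, w \ast \Psi^{(N)}(y),
$$
which is the desired conclusion. There is no real obstacle here: the entire argument is a one-step triangle-inequality computation combined with the nonnegativity of $w$, and no structure of $w$ beyond $w \geq 0$ is used.
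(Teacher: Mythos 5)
Your proof is correct and follows essentially the same route as the paper: both establish the pointwise estimate $(1+|x-z|^2)^{-N/2} \gtrsim (1+|x-y|^2)^{-N/2}(1+|y-z|^2)^{-N/2}$ via the triangle inequality and then integrate against $w(z) \geq 0$. You merely spell out the one-line triangle-inequality computation that the paper leaves implicit.
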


\begin{proof}
The triangle inequality quickly reveals that $(1+|x|^2)^{-N/2} \gtrsim (1+|x-y|^2)^{-N/2} (1+|y|^2)^{-N/2}$ for any $N \geq 0$. Then, as $w \geq 0$,
$$
\frac{w(z)}{(1+|x-z|^2)^{N/2}} \gtrsim \frac{1}{(1+|x-y|^2)^{N/2}} \frac{w(z)}{(1+|y-z|^2)^{N/2}},
$$
just by replacing $x \mapsto x-z$, $y\mapsto y-z$. The result follows from integrating with respect to the $z$ variable.
\end{proof}

\section{Symbolic calculus}\label{app:SymbCalc}

This appendix is devoted to providing a proof of Theorem \ref{RefinedSymbolicCalculus}, which is a very specific quantitative version of the symbolic calculus in Hörmander \cite{Hormander}. As is mentioned in Section \ref{subsec:symbcalc}, the order of the error symbol $e^N \in S^{m-N(1-\delta)+d\delta+\kappa \delta+\epsilon}_{\rho,\delta}$ is not necessarily sharp here, but it naturally arises from our proof. Nevertheless, such an order is admissible for our purposes, as one may choose $N$ large enough so that $e^N$ is of sufficiently large negative order. Our proof follows the same structure as that given in Stein \cite{bigStein} for the standard symbol classes $S^m$.

To justify our computations, we technically should replace $a$ by $a_\varepsilon$, where $a_\varepsilon(x,\xi)=a(x,\xi)\psi(\varepsilon x, \varepsilon \xi)$ and $\psi \in C_0^\infty (\R^d \times \R^d)$ with $\psi(0,0)=1$. The symbol $a_\varepsilon$, which has compact support, satisfies the same differential inequalities as $a$ uniformly in $0 < \varepsilon \leq 1$. As our estimates will be independent of $\varepsilon$, the passage to the limit when $\varepsilon \to 0$ gives the desired result; we refer to \cite{bigStein} for these standard details. Such considerations allow us to suppress the dependence on $\varepsilon$ in what follows.

Observe that we may write
$$
T_{\widehat{\varphi}_R}(T_af)(x)=\int_{\R^d}\int_{\R^d} c(x,\xi)e^{i(x-z)\cdot \xi}f(z)dz d\xi,
$$
where 
$$
c(x,\xi)=\int_{\R^d} \int_{\R^d} \widehat{\varphi}_R(\eta)a(y,\xi)e^{i(x-y)\cdot (\eta-\xi)}dy d\eta=\int_{\R^d}\widehat{\varphi}_R(\xi+\eta)\widehat{a}(\eta,\xi)e^{i x \cdot \eta}d\eta,
$$
and $\: \widehat{a} \:$ denotes Fourier transform with respect to the $x$ variable. We first obtain an estimate depending on the size of the support of $a$; such dependence will be later removed in the second part of the proof.

\subsection{Assuming $a(x,\xi)$ has compact support in the $x$-variable}

Integrating by parts,
$$
\widehat{a}(\eta,\xi)=\int_{\R^d} \frac{e^{i x \cdot \eta}}{(1+|\eta|^2)^M} (I-\Delta_x)^{M}a(x,\xi)dx,
$$
so
\begin{equation}\label{Decaybhat}
|\widehat{a}(\eta,\xi)| \lesssim (1+|\eta|)^{-2M}(1+|\xi|)^{m+2M\delta},
\end{equation}
for any $M \geq 0$; the implicit constant above depends on the size of the support of $a$ in the $x$ variable. For $\widehat{\varphi}_R(\xi+\eta)$ we use Taylor's formula around the point $\xi$,
$$
\widehat{\varphi}_R(\xi+\eta)=\sum_{|\gamma|< N} \frac{1}{\gamma!} \partial_\xi^\gamma \widehat{\varphi}_R(\xi) \eta^\gamma + \mathfrak{R}_N(\xi,\eta),
$$
where $\mathfrak{R}_N$ is the remainder in Taylor's theorem and is bounded by
$$
|\mathfrak{R}_N(\xi,\eta)|\lesssim \max_{|\gamma|=N} \max_{\zeta} |\partial_\xi^\gamma \widehat{\varphi}_R(\zeta)| |\eta|^N,
$$
where the maximum in $\zeta$ is taken on the line segment joining $\xi$ to $\xi+\eta$. Thus
\begin{align*}
c(x,\xi) & = \sum_{|\gamma|<N} \frac{1}{\gamma!} \int_{\R^d} \partial_\xi^\gamma \widehat{\varphi}_R(\xi)\eta^\gamma \widehat{a}(\eta,\xi)e^{ix \cdot \eta}d\eta + \int_{\R^d} \mathfrak{R}_N(\xi,\eta)\widehat{a}(\eta,\xi)e^{ix\cdot \eta}d\eta \\
& = \sum_{|\gamma|<N} \frac{i^{-|\gamma|}}{\gamma!} \partial_\xi^\gamma \widehat{\varphi}_R(\xi) \partial_x^\gamma a(x,\xi) + \int_{\R^d} \mathfrak{R}_N(\xi,\eta)\widehat{a}(\eta,\xi)e^{ix\cdot \eta}d\eta
\end{align*}
and
$$
e^N(x,\xi)=\int_{\R^d} \mathfrak{R}_N(\xi,\eta)\widehat{a}(\eta,\xi)e^{ix\cdot \eta}d\eta.
$$
We need to show that the $e^N \in S^{m-N(1-\delta)+d\delta+\kappa \delta+\epsilon}_{\rho,\delta}$ and satisfies the differential inequalities \eqref{ErrorDiffIneq}.

Observe that, for $\gamma$ such that $|\gamma|=N$,
$$
\partial_\xi^\gamma \widehat{\varphi}_R(\zeta)=R^{-N} (\partial_\xi^\gamma \widehat{\varphi})(R^{-1}\zeta) \lesssim R^{-\epsilon} (1+|\zeta|)^{-N+\epsilon},
$$
as the support condition on $\widehat{\varphi}$ ensures $|\zeta|\sim R \sim |\zeta|+1$, since $R>1$. This leads to the following estimates for the remainder,
$$
|\mathfrak{R}_N(\xi,\eta)| \lesssim R^{-\epsilon} |\eta|^N (1+|\xi|)^{-N+\epsilon} \:\:\: \text{for} \:\:\: |\xi|\geq 2 |\eta|,
$$
and
$$
|\mathfrak{R}_N(\xi,\eta)| \lesssim R^{-\epsilon} |\eta|^N \:\:\: \text{for} \:\:\: |\xi|\leq 2|\eta|,
$$
as $N \geq \epsilon$. Using the estimate \eqref{Decaybhat} in the form
$$
|\widehat{a}(\eta,\xi)|\lesssim (1+|\eta|)^{-2M_1}(1+|\xi|)^{m+2M_1 \delta} \:\:\: \text{for} \:\:\: |\xi|\geq 2 |\eta|,
$$
and
$$
|\widehat{a}(\eta,\xi)|\lesssim (1+|\eta|)^{-2M_2}(1+|\xi|)^{m+2M_2 \delta} \:\:\: \text{for} \:\:\: |\xi|\leq 2|\eta|,
$$
where $M_1, M_2 \geq 0$, we have
\begin{align*}
|e^N(x,\xi)| & \lesssim R^{-\epsilon} (1+|\xi|)^{m+2M_1\delta-N+\epsilon} \int_{|\xi|\geq 2 |\eta|} (1+|\eta|)^{-2M_1} |\eta|^N d\eta \\
& \;\;\;\;\;\;\; + R^{-\epsilon} (1+|\xi|)^{m+2M_2\delta} \int_{|\xi| \leq 2|\eta|} (1+|\eta|)^{-2M_2} |\eta|^N d\eta \\
& \lesssim  R^{-\epsilon} (1+|\xi|)^{m+2M_1\delta-N+\epsilon} + R^{-\epsilon} (1+|\xi|)^{m+2M_2\delta - 2M_2+N+d}
\end{align*}
provided $-2M_1+N+d <0$ and $-2M_2+N+d<0$. Choosing
$$
M_1=(N+d+\kappa)/2
$$
and
$$
M_2=\frac{2N+d(1-\delta)-\kappa \delta-\epsilon-N\delta}{2(1-\delta)},
$$
which clearly satisfies the condition $-2M_2+N+d<0$, as $N>\epsilon+\kappa \delta$, one has
$$
|e^N(x,\xi)| \lesssim R^{-\epsilon} (1+|\xi|)^{m-N(1-\delta)+d\delta+\kappa \delta+\epsilon}.
$$
In view of the definitions of the symbols $c$ and $e^N$, the use of the Leibniz formula and the condition $\rho \leq 1$ allows one, by the same arguments as above, to deduce the differential inequalities \eqref{ErrorDiffIneq} for all multi-indices $\nu, \sigma \in \N^d$.

\subsection{The case of general $a(x,\xi)$}

Observe that it suffices to prove the differential inequalities \eqref{ErrorDiffIneq} for $x$ near an arbitrary but fixed point $x_0$; in particular we prove them for $x$ such that $|x-x_0|\leq 1/2$, with bounds independent of $x_0$. To this end, let $\theta$ be a smooth function which equals $1$ on $|y-x_0|\leq 1$ and supported in $|y-x_0|\leq 2$, and write $a=\theta a +(1-\theta)a=a_1+a_2$. For $a_1$, one may argue as before and write
$$
c_1(x,\xi)=\sum_{|\gamma| < N} \frac{i^{-|\gamma|}}{\gamma !} (\partial_\xi^\gamma \widehat{\varphi}_R(\xi))(\partial_x^\gamma a_1 (x,\xi)) + \int_{\R^d} \mathfrak{R}_N(\xi,\eta)\widehat{a}_1(\eta,\xi)e^{i x \cdot \eta} d\eta .
$$
As $a_1=a$ for $|x-x_0| \leq 1/2$ and the size of the support of $a_1$ in the $x$ variable is constant and independent of $x_0$, the previous argument reveals that the symbol
$$
e^N_1(x,\xi):=c_1(x,\xi)-\sum_{|\gamma| < N} \frac{i^{-|\gamma|}}{\gamma !} (\partial_\xi^\gamma \widehat{\varphi}_R(\xi))(\partial_x^\gamma a (x,\xi))
$$
satisfies the differential inequalities \eqref{ErrorDiffIneq} for $|x-x_0| \leq 1/2$, with bounds independent of $x_0$. As
$$
|e^N (x,\xi)| \leq |e^N_1(x,\xi)| + |c_2(x,\xi)|,
$$
where $c_2$ is the symbol of $T_{\widehat{\varphi}_R} \circ T_{a_2}$, it is enough to show that $c_2$ satisfies the same estimates as $e^N_1$. Indeed, we will show that for $|x-x_0|\leq 1/2$,
$$
|c_2(x,\xi)| \lesssim R^{-\epsilon} (1+|\xi|)^{m-\bar N}
$$
for any $\bar N \geq 0$; the proof then follows by taking 
$$
\bar N = N(1-\delta)-d\delta-\kappa \delta-\epsilon,
$$
which is nonnegative for $N > \frac{d\delta+\kappa \delta+\epsilon}{1-\delta}$.

Recall that
$$
c_2(x,\xi)=\int_{\R^d} \int_{\R^d} \widehat{\varphi}_R(\eta)a_2(y,\xi)e^{i(x-y)\cdot (\eta-\xi)}dy d\eta.
$$
Integrating by parts with respect to the $\eta$ variable,
$$
c_2(x,\xi)=\int_{\R^d} \int_{\R^d} \frac{\Delta_{\eta}^{N_1} \widehat{\varphi}_R(\eta)}{|x-y|^{2N_1}}a_2(y,\xi)e^{i(x-y)\cdot (\eta-\xi)}dy d\eta,
$$
which is a convergent integral, as for $|x-x_0|\leq 1/2$ and $|y-x_0|\geq 1$, we have $|x-y|\geq 1/2$. Integrating by parts with respect to the $y$ variable,
$$
c_2(x,\xi)=\int_{\R^d} \int_{\R^d} \frac{\Delta_{\eta}^{N_1} \widehat{\varphi}_R(\eta)}{(1+|\eta-\xi|^2)^{N_2}}(I-\Delta_y)^{N_2}\Big(\frac{a_2(y,\xi)}{|x-y|^{2N_1}}\Big)e^{i(x-y)\cdot (\eta-\xi)}dy d\eta.
$$
In view of the differential inequalities satisfied by $\widehat{\varphi}_R$ and $a_2$,
$$
|c_2(x,\xi)|\lesssim R^{-\epsilon} \int_{\R^d} \int_{\R^d} \frac{(1+|\eta|)^{-2N_1+\epsilon}(1+|\xi|)^{m+2N_2\delta}}{(1+|\eta-\xi|)^{2N_2}(1+|x-y|)^{2N_1}}dyd\eta.
$$
The integration in $y$ is finite if we choose $N_1>d/2$. The triangle inequality trivially reveals
$$
\frac{1}{(1+|\eta-\xi|)^{2N_2}} \leq \frac{(1+|\eta|)^{2N_2}}{(1+|\xi|)^{2N_2}},
$$
for any $N_2\geq 0$, so
\begin{align*}
|c_2(x,\xi)| \lesssim R^{-\epsilon} (1+|\xi|)^{m-2N_2(1-\delta)} \int_{\R^d} (1+|\eta|)^{-2N_1+2N_2+\epsilon}d\eta \lesssim R^{-\epsilon} (1+|\xi|)^{m-\bar N},
\end{align*}
provided we take $2N_2(1-\delta)=\bar N$ and $N_1$ satisfying $2N_1-2N_2-\epsilon>d$, that is
$$
N_1 > \frac{d+\epsilon+\bar N/(1-\delta)}{2}.
$$
Observe that any such $N_1$ also satisfies the required condition $N_1>d/2$, as $\bar N \geq 0$.

In view of the definition of $c_2$, the use of the Leibniz formula and of similar arguments to the ones exposed above leads one to deduce that
$$
|\partial_x^\nu \partial_\xi^\sigma c_2(x,\xi)| \lesssim R^{-\epsilon} (1+|\xi|)^{m-\rho|\sigma|+\delta|\nu| - \bar N}
$$
for any $\bar N \geq 0$ and all multi-indices $\nu, \sigma \in \N^d$, so we may conclude that $e^N$ satisfies the required differential inequalities \eqref{ErrorDiffIneq}.

\bibliographystyle{abbrv} 

\bibliography{RevisedBeltranSymbols}

\begin{thebibliography}{10}

\bibitem{AM}
P.~Auscher and J.~M. Martell.
\newblock Weighted norm inequalities, off-diagonal estimates and elliptic
  operators. part i: General operator theory and weights.
\newblock {\em Adv. in Math.}, 212(1):225–276, 2007.

\bibitem{Bel2015}
D.~Beltran.
\newblock A {F}efferman-{S}tein inequality for the {C}arleson operator.
\newblock To appear in \textit{Rev. Mat. Iber.} {P}reprint arXiv:1410.6085.

\bibitem{BB}
D.~Beltran and J.~Bennett.
\newblock Subdyadic square functions and applications to weighted harmonic
  analysis.
\newblock {\em Adv. Math.}, 307:72--99, 2017.

\bibitem{Ben2014}
J.~Bennett.
\newblock Optimal control of singular {F}ourier multipliers by maximal
  operators.
\newblock {\em Anal. PDE}, 7(6):1317--1338, 2014.

\bibitem{BCSV}
J.~Bennett, A.~Carbery, F.~Soria, and A.~Vargas.
\newblock A {S}tein conjecture for the circle.
\newblock {\em Math. Ann.}, 336(3):671--695, 2006.

\bibitem{BH}
J.~Bennett and S.~Harrison.
\newblock Weighted norm inequalities for oscillatory integrals with finite type
  phases on the line.
\newblock {\em Adv. Math.}, 229(4):2159--2183, 2012.

\bibitem{CV1972}
A.-P. Calder{\'o}n and R.~Vaillancourt.
\newblock A class of bounded pseudo-differential operators.
\newblock {\em Proc. Nat. Acad. Sci. U.S.A.}, 69:1185--1187, 1972.

\bibitem{CarWeight}
A.~Carbery.
\newblock A weighted inequality for the maximal {B}ochner-{R}iesz operator on
  {${\bf R}^2$}.
\newblock {\em Trans. Amer. Math. Soc.}, 287(2):673--680, 1985.

\bibitem{CRS}
A.~Carbery, E.~Romera, and F.~Soria.
\newblock Radial weights and mixed norm inequalities for the disc multiplier.
\newblock {\em J. Funct. Anal.}, 109(1):52--75, 1992.

\bibitem{CS}
A.~Carbery and A.~Seeger.
\newblock Weighted inequalities for {B}ochner-{R}iesz means in the plane.
\newblock {\em Q. J. Math.}, 51(2):155--167, 2000.

\bibitem{CT}
S.~Chanillo and A.~Torchinsky.
\newblock Sharp function and weighted {$L^p$} estimates for a class of
  pseudodifferential operators.
\newblock {\em Ark. Mat.}, 24(1):1--25, 1986.

\bibitem{Christ}
M.~Christ.
\newblock On almost everywhere convergence of {B}ochner-{R}iesz means in higher
  dimensions.
\newblock {\em Proc. Amer. Math. Soc.}, 95(1):16--20, 1985.

\bibitem{CordKak}
A.~C{\'o}rdoba.
\newblock The {K}akeya maximal function and the spherical summation
  multipliers.
\newblock {\em Amer. J. Math.}, 99(1):1--22, 1977.

\bibitem{CF}
A.~C{\'o}rdoba and C.~Fefferman.
\newblock A weighted norm inequality for singular integrals.
\newblock {\em Studia Math.}, 57(1):97--101, 1976.

\bibitem{CR2014}
A.~C{\'o}rdoba and K.~M. Rogers.
\newblock Weighted estimates for conic {F}ourier multipliers.
\newblock {\em Math. Z.}, 278(1-2):431--440, 2014.

\bibitem{DiPL}
F.~Di~Plinio and A.~K. Lerner.
\newblock On weighted norm inequalities for the {C}arleson and
  {W}alsh-{C}arleson operator.
\newblock {\em J. Lond. Math. Soc. (2)}, 90(3):654--674, 2014.

\bibitem{Fe73}
C.~Fefferman.
\newblock {$L^{p}$} bounds for pseudo-differential operators.
\newblock {\em Israel J. Math.}, 14:413--417, 1973.

\bibitem{FS}
C.~Fefferman and E.~M. Stein.
\newblock Some maximal inequalities.
\newblock {\em Amer. J. Math.}, 93:107--115, 1971.

\bibitem{GCRdF}
J.~Garc{\'\i}a-Cuerva and J.~L. Rubio~de Francia.
\newblock {\em Weighted norm Inequalities and related topics}.
\newblock vol. 116, North-Holland Math. Stud. North-Holland Publishing Co.,
  Amsterdam, 1985.

\bibitem{Halmos}
P.~R. Halmos and V.~S. Sunder.
\newblock {\em Bounded integral operators on {$L^{2}$} spaces}, volume~96 of
  {\em Ergebnisse der Mathematik und ihrer Grenzgebiete [Results in Mathematics
  and Related Areas]}.
\newblock Springer-Verlag, Berlin-New York, 1978.

\bibitem{Hormander}
L.~H{\"o}rmander.
\newblock Pseudo-differential operators and hypoelliptic equations.
\newblock In {\em Singular integrals ({P}roc. {S}ympos. {P}ure {M}ath., {V}ol.
  {X}, {C}hicago, {I}ll., 1966)}, pages 138--183. Amer. Math. Soc., Providence,
  R.I., 1967.

\bibitem{KN}
J.~J. Kohn and L.~Nirenberg.
\newblock An algebra of pseudo-differential operators.
\newblock {\em Comm. Pure Appl. Math.}, 18:269--305, 1965.

\bibitem{Lac2015}
M.~T. Lacey.
\newblock An elementary proof of the {$A_2$} {B}ound.
\newblock {P}reprint arXiv:1501.05818.

\bibitem{LRSw}
S.~Lee, K.~M. Rogers, and A.~Seeger.
\newblock Improved bounds for {S}tein's square functions.
\newblock {\em Proc. Lond. Math. Soc. (3)}, 104(6):1198--1234, 2012.

\bibitem{LeA2}
A.~K. Lerner.
\newblock A simple proof of the {$A_2$} conjecture.
\newblock {\em Int. Math. Res. Not. IMRN}, (14):3159--3170, 2013.

\bibitem{LeNew}
A.~K. Lerner.
\newblock On pointwise estimates involving sparse operators.
\newblock {\em New York J. Math.}, 22:341--349, 2016.

\bibitem{MRS10}
N.~Michalowski, D.~J. Rule, and W.~Staubach.
\newblock Weighted norm inequalities for pseudo-pseudodifferential operators
  defined by amplitudes.
\newblock {\em J. Funct. Anal.}, 258(12):4183--4209, 2010.

\bibitem{MRScan}
N.~Michalowski, D.~J. Rule, and W.~Staubach.
\newblock Weighted {$L^p$} boundedness of pseudodifferential operators and
  applications.
\newblock {\em Canad. Math. Bull.}, 55(3):555--570, 2012.

\bibitem{Miller}
N.~Miller.
\newblock Weighted {S}obolev spaces and pseudodifferential operators with
  smooth symbols.
\newblock {\em Trans. Amer. Math. Soc.}, 269(1):91--109, 1982.

\bibitem{NS}
A.~Nagel and E.~M. Stein.
\newblock On certain maximal functions and approach regions.
\newblock {\em Adv. in Math.}, 54(1):83--106, 1984.

\bibitem{Pe94}
C.~P{\'e}rez.
\newblock Weighted norm inequalities for singular integral operators.
\newblock {\em J. London Math. Soc. (2)}, 49(2):296--308, 1994.

\bibitem{Pe95}
C.~P{\'e}rez.
\newblock On sufficient conditions for the boundedness of the
  {H}ardy-{L}ittlewood maximal operator between weighted {$L^p$}-spaces with
  different weights.
\newblock {\em Proc. London Math. Soc. (3)}, 71(1):135--157, 1995.

\bibitem{Stein79}
E.~M. Stein.
\newblock Some problems in harmonic analysis.
\newblock In {\em Harmonic analysis in {E}uclidean spaces ({P}roc. {S}ympos.
  {P}ure {M}ath., {W}illiams {C}oll., {W}illiamstown, {M}ass., 1978), {P}art
  1}, Proc. Sympos. Pure Math., XXXV, Part, pages 3--20. Amer. Math. Soc.,
  Providence, R.I., 1979.

\bibitem{bigStein}
E.~M. Stein.
\newblock {\em Harmonic analysis: real-variable methods, orthogonality, and
  oscillatory integrals}, volume~43 of {\em Princeton Mathematical Series}.
\newblock Princeton University Press, Princeton, NJ, 1993.
\newblock With the assistance of Timothy S. Murphy, Monographs in Harmonic
  Analysis, III.

\bibitem{Wi89}
J.~M. Wilson.
\newblock Weighted norm inequalities for the continuous square function.
\newblock {\em Trans. Amer. Math. Soc.}, 314(2):661--692, 1989.

\bibitem{Wi07}
M.~Wilson.
\newblock The intrinsic square function.
\newblock {\em Rev. Mat. Iberoam.}, 23(3):771--791, 2007.

\end{thebibliography}

%



%
%
%
%
%

\end{document}